\def\old#1{}
\newtheorem{assumption}{Assumption}
\newtheorem{example}{Example}
\newtheorem{remark}{Remark}
\newcommand{\be}{\begin{equation}}
\newcommand{\ee}{\end{equation}}
\renewcommand{\v}[1]{\ensuremath{\boldsymbol{\mathrm{#1}}}}
\newtheorem{theorem}{Theorem}[section]
\newtheorem{lemma}[theorem]{Lemma}
\newtheorem{corollary}[theorem]{Corollary}
\newtheorem{proposition}[theorem]{Proposition}
\newtheorem{definition}[theorem]{Definition}
\newcommand{\E}{\mathbb{E}}
\providecommand{\boldsymbol}[1]{\mbox{\boldmath $#1$}}
\newcommand{\beq}{\begin{equation}}
\newcommand{\eeq}{\end{equation}}
\newcommand{\beqa}{\begin{eqnarray}}
\newcommand{\eeqa}{\end{eqnarray}}
\newcommand{\beqan}{\begin{eqnarray*}}
\newcommand{\eeqan}{\end{eqnarray*}}
\newcommand{\bmat}[1]{ \begin{bmatrix} #1 \end{bmatrix}}
\newcommand{\Nset}{\mathbb{N}}
\newcommand{\Rset}{\mathbb{R}}
\newcommand{\Ical}{{\cal I}}
\newcommand{\Bcal}{{\cal B}}
\newcommand{\Dcal}{{\cal D}}
\newcommand{\Kcal}{{\cal K}}
\newcommand{\Scal}{{\cal S}}
\newcommand{\Ucal}{{\cal U}}
\newcommand{\Xcal}{{\cal X}}
\newcommand{\Zcal}{{\cal Z}}
\newcommand{\blue}[1]{{#1}}
\newcounter{l1}
\newcounter{l2}
\newcounter{l3}
\newcommand{\bdotlist}{\begin{list}{$\bullet$}{}}
\newcommand{\bboxlist}{\begin{list}{$\Box$}{}}
\newcommand{\bbboxlist}{\begin{list}{\raisebox{.005in}{{\tiny
$\blacksquare$ \ \ }}}{}}
\newcommand{\bdashlist}{\begin{list}{$-$}{} }
\newcommand{\blist}{\begin{list}{}{} }
\newcommand{\barablist}{\begin{list}{\arabic{l1}}{\usecounter{l1}}}
\newcommand{\balphlist}{\begin{list}{(\alph{l2})}{\usecounter{l2}}}
\newcommand{\bAlphlist}{\begin{list}{\Alph{l2}.}{\usecounter{l2}}}
\newcommand{\bdiamlist}{\begin{list}{$\diamond$}{}}
\newcommand{\bromalist}{\begin{list}{(\roman{l3})}{\usecounter{l3}}}
\newcommand{\thm}[1]{\noindent \begin{theorem} #1   \end{theorem}}
\begin{document}

\title{On the Marginal Value of Electricity Storage$^*$}

\author{Eilyan Bitar$^a$,   Pramod Khargonekar$^b$,  \ and \ Kameshwar Poolla$^c$   %\vspace{-.3in} %<-this % stops a space
%Eilyan Bitar$^1$,~\IEEEmembership{Member,~IEEE,} \  Pramod Khargonekar$^2$,~\IEEEmembership{Fellow,~IEEE,}   \ and \ Kameshwar Poolla$^3$,~\IEEEmembership{Fellow,~IEEE}%<-this % stops a space
\thanks{
$^*$This work was supported in part by NSF grants ECCS-1351621, CNS-1239178,  CNS-1239274,  and  IIP-1632124.
%PSERC sub-award S-52, and the US DoE under the CERTS initiative. 
This work builds on our preliminary
results, presented at the 30th IEEE American Control Conference \cite{BitarACC2011}. The current manuscript differs significantly from the conference version in terms of new results, formal proofs, and
detailed technical discussions.}% <-this % stops a space
\thanks{$^a$Corresponding author: E. Bitar  is with the School of Electrical and Computer Engineering, Cornell University, Ithaca, NY 14853 USA. ({\tt\small eyb5@cornell.edu})}
\thanks{$^b$P. Khargonekar is with the University of California, Irvine, CA 92697 USA. ({\tt\small pramod.khargonekar@uci.edu })}%
\thanks{$^c$K. Poolla is with the Department of Electrical Engineering and Computer
Science, University of California, Berkeley, CA 94704 USA. ({\tt\small poolla@berkeley.edu})}
}

\date{\today}
\maketitle

\begin{abstract}
We investigate the problem of characterizing the economic value of  energy storage capacity to a wind power producer (WPP) that sells its energy  in a conventional two-settlement electricity market. The WPP can offer a forward contract to supply power in the day-ahead market, subject to financial penalties for imbalances between the contracted power and the power that is delivered in real-time. We consider the setting in which the WPP has access to a co-located energy storage system, and can thus reshape its wind power production subject to its storage capacity constraints.  Modeling wind power as a random process, we  show that the problem of determining optimal forward contract offerings---given recourse with storage---is convex. We further establish that the maximum expected profit is concave and non-decreasing in the energy storage capacity, which reveals that the greatest marginal benefit from energy storage is derived from initial investment in small storage capacity. We provide a characterization of the marginal value of small energy storage capacity to the WPP. The formulae we derive shed light on the relationship between the value of storage and certain statistical measures of variability in the underlying wind power process. 

%Going beyond numerical sensitivities, which often prove cumbersome to compute and limited in terms of their explanatory power,  we derive explicit formulae that uncover a fundamental connection between the marginal value of storage capacity and certain measures of statistical variation of the underlying wind power process. 
%These simple expressions not only shed light on the correct measure of statistical variation in quantifying the value of storage, but also provide a tractable approach to marginal value calculations from data. 
% as our results require no distributional assumptions on the wind power process. 
\end{abstract}

%%%%%%%%%%%%%%%%%%%%%%%%%%%%%%%%%%%%%%%%%%%%%%%%%%%%%%%%%%%%%%%%%%%%%%%%%%%%%%%%
\section{Introduction}
Driven by concerns of climate change and energy security, there is a growing worldwide investment in renewable energy \cite{GWEC2010}. The available supply of power from sources like wind and solar is {\em variable}---it is uncertain, intermittent, and largely uncontrollable. These characteristics pose major challenges to the deep  integration of renewables into the  grid \cite{EWITS,IVGTF2009,WWTS}.

There is considerable investment and interest in energy storage as a means to mitigate the variability of renewable generation
\cite{castillo2014grid, cavallo2, decesaro, denholmnrel, divya, korpaas03}. 
For instance, California's strategic storage mandate calls for 1.3 GW of ramping capability to be 
commissioned by 2020.
Hydro-power has traditionally been used for such purposes \cite{cast}. 
While pumped hydro is an efficient and flexible storage modality, its siting is geographically constrained and thus offers 
limited balancing capability to the system at large due to transmission constraints. As utility-scale renewable energy resources continue to proliferate, the ability to directly shape their power output with alternative forms of electric energy storage becomes more compelling.

In this paper, we investigate the ability of energy storage to mitigate the cost of balancing variable renewable power. The perspective taken is that of a wind power producer (WPP), whose objective is to sell its \emph{variable power} in  conventional, two-settlement energy markets akin to those previously studied in \cite{angarita1,Bathurst2002, bitar_trans, Dent2011, Matevosyan2006,morales10, Pag2014, Pinson2007}. In such markets, imbalances arising between the contracted and realized supply of power are subject to financial penalty. 
Accordingly, we quantify the value of \emph{co-located energy storage} in terms of its ability to reduce the expected cost of such contract imbalances, and, thereby,  increase the profitability of wind power in such markets.

%%%%%%%%%%%%%%%%%%%%%%%%%%%%%%%%%%%%%%%%%%%%%%%
\subsection{Contribution and Related Work}

Energy storage devices such as pumped-hydro, compressed air \cite{katz, lund2,succar},  sodium-sulfur batteries \cite{oshima}, and    more general battery-based  technologies \cite{divya} offer the capability to firm variable wind power. The ability to do so depends centrally on the placement, sizing, and control of such energy storage systems.  \blue{For a review of papers that consider the problem of optimally siting energy storage in (deterministic) transmission-constrained power systems, we refer the reader to \cite{bose2012optimal, DenholmSioshansi2009, thrampoulidis2016optimal}. }
There are also a number of papers that explore the
economic viability of these hybrid wind-storage systems in producing baseload generation 
\cite{cavallo1, cavallo2, greenblatt1}. These studies 
conclude that such hybrid systems compete favorably with gas turbine, conventional fossil fuel, and nuclear generation.

This paper considers the problem of leveraging   energy storage systems to improve the profitability of a wind power producer (WPP) participating in a two-settlement energy market. In the setting considered, the WPP offers  a contract for firm power in the day-ahead market, subject to financial penalty for deviations between the contracted and realized supply in the real-time market. The recourse afforded by storage serves to reduce the risk exposure of the WPP, enabling it to offer larger contracts, which in turn increases its expected profit. 
We show that the optimal contract sizing problem reduces to convex programming. 
We also prove that the maximum expected profit of the WPP is a concave and non-decreasing function of the storage capacity. As a result, the greatest marginal benefit is derived for a \emph{small} energy storage capacity. In Theorem \ref{thm:mv}, we provide an explicit characterization of
the marginal value of small energy storage capacity  in terms of a specific statistical measure of  variation in the underlying wind power process.
% Importantly, our structural and marginal value results are \emph{distribution-free}, in the sense that they hold for arbitrary stochastic (and possibly non-stationary)  wind power processes.
% In addition, our storage model accommodates intra-contract sub-hourly storage adjustments. This enables us to capture the effects of intra-hour wind power variability on the value of storage.  

% Numerical Value --   angarita1  harsha2015    parandehgheibi2015value   Qin2015  rao2015value  van2013optimal

% , for some special wind and price processes, the incremental value of a small amount of storage may be evaluated in closed form
% Analytical Value (rely on strigent assumptions -- wind drawn from a stationary uniform distribution) --   
% Special case of our more general characterization...
% chowdhury2016benefits    KimPowell2011

%There are a number of  related papers in the literature \cite{}, which investigate the design of storage control policies to guide the operation of storage in real-time electricity markets. 

\blue{
There are a number of  related papers in the literature, which attempt to characterize  the economic value of energy storage capacity across a variety of electricity market settings using either deterministic (offline) optimization methods \cite{de2016value,del2016synergy, lamont2013assessing},  stochastic optimization methods \cite{angarita1, go2016assessing}, online convex optimization methods \cite{Qin2015}, or dynamic programming-based methods \cite{chowdhury2016benefits,  KimPowell2011, harsha2015,  parandehgheibi2015value,  rao2015value,  van2013optimal}. 
 With the exception of \cite{chowdhury2016benefits,KimPowell2011}, the majority of the aforementioned papers calculate the economic value  of storage capacity using sensitivity analyses that are largely numerical  in nature---in contrast to the closed-form marginal value expressions established in this paper.  Closer to the approach adopted in this paper, the authors in \cite{chowdhury2016benefits,KimPowell2011} derive analytical expressions for the marginal value of storage, albeit under the somewhat restrictive assumption that the wind power generated in each time period be uniformly distributed. In contrast, the structural and marginal value results derived in this paper are \emph{distribution-free}, in the sense that they hold for any (possibly nonstationary) wind power process with an absolutely continuous joint distribution. We note, however, that an important limitation of our results is their reliance upon the assumption of constant real-time imbalance prices.}

%
%Research most closely related to the work in this paper include \cite{angarita1, KimPowell2011}.   Optimizing combined wind/hydro power in a day-ahead energy market is explored in \cite{angarita1}. The authors formulate a stochastic optimization problem that accounts for uncertainty in the wind power and electricity prices. Using a \emph{scenario based} approach, they show that the optimization problem can be  cast as a mixed integer linear program. 
%
%The authors in \cite{KimPowell2011} treat the problem of jointly optimizing  hour-ahead wind energy commitments with dedicated storage 
%to maximize expected profit over an infinite horizon. Under a myopic storage control policy, the authors use  dynamic programming to characterize  optimal contracts and determine the increase in steady state profit attributable to storage. 
%
%
%

\emph{Organization:} The remainder of the paper is organized as follows. Sections \ref{models} and \ref{sec:prob} describe the models that we employ in our analysis, and formulate the specific  problems that we address, respectively.  Our main results are presented in Sections \ref{mainresults} and \ref{sec:marval}, followed by concluding remarks in Section \ref{conclu}.

\emph{Notation:} For any finite set $A$, we denote its cardinality by $|A|$. 
Let $\Nset$ denote the set of non-negative integers, $\Rset$ the set of real numbers, $\Rset_+$ the non-negative reals, 
and $\Rset^N$ the usual Euclidean space. 
%We  equip $\Rset^N$ with its Borel sigma algebra, denoted by  $\Bcal(\Rset^N)$, unless otherwise specified. 
For $x \in \Rset$, let $x^+ = \max\{x,0\}$  and $x^- = \min\{x,0\}$.
For any subset $A \subseteq \Rset^N$, we define the indicator function $\v{1}_A: \Rset^N \rightarrow \{0,1\}$ as 
$$ \v{1}_A(x) = \begin{cases}  1, & x \in A \\ 0, & x \notin A. \end{cases} $$

%%%%%%%%%%%%%%%%%%%%%%%%%%%%%%%%%%%%%%%%%%%%%%%
\section{Supply, Storage and Market Models} \label{models}

%----------------------------------------------
\subsection{Intermittent Supply Model}
Time is slotted and indexed by $k$. The intermittent generation of the WPP in time period $k$ is $\xi_k$. This is normalized to
nameplate so $\xi_k \in [0,1]$.  We model the wind farm output as a  discrete-time  
random process \ $\v{\xi} = (\xi_0, \xi_1, \dots,\xi_{N-1})$. Let  
\beq
\Phi_k(x) = \mathbb{P}\{ \xi_k \leq x\},
\ \ \ k=0,\dots,N-1
\eeq 
denote the cumulative distribution function of $\xi_k$.  Define the {\em time-averaged} cumulative distribution function as
\beq
F(x) = \frac{1}{N} \sum_{k=0}^{N-1} \Phi_k(x).
\eeq
 We assume that the intermittent supply has a \emph{zero cost of production}, as it is derived from wind and solar energy.

%----------------------------------------------
\subsection{Energy Storage Model}
\label{storage_model}
We consider a simple energy balance model \cite{korpaas03} for \emph{perfectly efficient} energy storage\footnote{The assumption of perfectly efficient storage is for ease of exposition. All of our results can be generalized to accommodate non-ideal storage systems with leakage and energy conversion inefficiencies. See Remark \ref{rem:nonid} for a discussion on such generalization.}:

\beq
\label{stor_dynam_disc} z_{k+1} = z_k  - u_k, \quad k =0,1,\cdots
\eeq
Here, $z_k \geq 0$  represents the amount of energy in the storage at the beginning of time slot $k$, and $u_k$  denotes the energy that is extracted from or injected into the storage  during time slot $k$. \blue{The sign convention is such 
that $u_k > 0$ corresponds to an energy extraction, and $u_k < 0$ corresponds to an energy injection.}
 Without loss of generality, we assume a zero initial condition, 
$z_0 = 0$. 
We impose the state and input constraints: 
\begin{eqnarray}
0 \leq & z_k & \leq b  \label{con1} \\
-r \leq & u_k & \leq   r \label{con2}
\end{eqnarray}
to capture the \emph{energy capacity} $b$  and the maximum charging/discharging \emph{rate} $r$. 
We refer to the \emph{storage type} as the parameter vector $\theta = (b,r)$.

%----------------------------------------------
\subsection{Market Model}
We consider a two-settlement electricity market  consisting of a day-ahead (DA) market and a real-time (RT) imbalance market.
\blue{In the DA market, a generator can submit offers to produce power over the following day according  to a sequence of power contracts  that are typically piecewise constant over hour-long time intervals.  Normally, the DA market will close for offers by 10 AM,  and clears by 1 PM on the day immediately preceding the delivery day.  The contracts cleared in the DA market
are binding and call for delivery in the RT market, where uninstructed deviations between the contracted power and the delivered power are penalized according to imbalance prices determined in the RT market.}\footnote{We note that the market model considered in this paper conforms with the prevailing literature on the integration of wind power through two-settlement electricity markets  
\cite{Baeyens2013,Bathurst2002,bitar_trans,BitarACC2011,Dent2011,Matevosyan2006,Pinson2007}.}

\subsubsection*{Day-Ahead (DA) Market} 
%In the DA market, a generator offers a sequence of contracts for energy that are typically constant power on intervals of length one hour.
In this paper, we  restrict our analysis to a single DA contract interval, which we discretize into  $N$ time slots reflecting the finer temporal granularity of  RT market operations.\footnote{\blue{We note that it is straightforward to extend the formulation and results presented in this paper to accommodate the more general setting in which a supplier can offer a sequence of  multiple DA contracts (e.g., one for each hour of the day)  that are remunerated according to the corresponding sequence of hourly DA market  prices. We refer the  reader to Section II-C of \cite{BitarACC2011} for the mathematical details of such a formulation.}}  We let $x \in \Rset$ (MWh)  denote the offered  contract, which is taken to be constant across the $N$ time slots defining the contract interval. The supplier is payed according to  the DA market clearing price $p \in \Rset_+$ (\$/MWh) associated with that contract interval. This yields the supplier a revenue of $N \cdot px$ in the DA market.

 \emph{Real-Time (RT) Market.} \quad  As the forward contract $x$ is  offered with significant lead time on delivery, deviations naturally arise between the offered contract and the delivered power. These contract deviations are penalized according to imbalance prices derived from the RT market.
A shortfall in generation during period $k$ is penalized at a price $\alpha \in \Rset_+$ (\$/MWh), while an excess in generation is penalized at a price $\beta \in \Rset_+$ (\$/MWh). Typically, deviations from hour-long forward contracts are measured on a finer temporal granularity corresponding to intervals of length five minutes.  Accordingly, we consider a temporal discretization of the  contract interval into $N$ discrete time periods, where each period's imbalance is measured relative to  the baseline contract $x$. 
\blue{We also note that, in practice, a WPP may possess the ability to physically curtail its  power output in real-time   by pitching its turbine blades to avoid overproduction imbalance penalties. One can reflect the economic impact of  this curtailment capability by setting the overproduction imbalance price $\beta$ equal to zero.}

%
%\red{The supplier is assumed to behave as a \emph{price taker} in the forward market. As such, the forward settlement price $p$ is assumed fixed and known. This assumption is natural under the presupposition that the supplier's individual capacity is small relative to aggregate capacity  of the market.}

\blue{ \emph{Market Assumptions.} \quad  
We make several common assumptions regarding  the determination of  prices in the two-settlement  energy market under consideration. First, we
assume that the WPP's production capacity is small
relative to the aggregate capacity of other generators participating in the DA energy market. 
Under this assumption,
it is fair to assume that the WPP cannot appreciably affect the determination of prices.
Accordingly, we require the WPP to behave as a price taker in the
DA energy market, and model the DA energy $p$ as \emph{fixed} and \emph{known} at the time of forward contract offering. We refer the reader to Remark \ref{rem:supplyfunc}, which provides an alternative interpretation of the optimal forward contract offering  as a supply function offer in the DA market.}

Second, as the RT imbalance prices $(\alpha, \beta)$ are
not known to the WPP at the time of committing to a forward contract
in the DA market, we model them as random variables whose expected values
at the time of forward contract offering are denoted by
\begin{align*}
m_{\alpha} = \mathbb{E}[\alpha] \ \ \text{and} \ \ m_{\beta}  =  \mathbb{E}[\beta].
\end{align*}
Additionaly, the RT imbalance prices $(\alpha, \beta)$ are assumed to be \emph{independent} of the intermittent supply process $\v{\xi}$. Again, such an assumption is reasonable if the WPP's production capacity is small relative to the market size, as the WPP's realized contract  deviations will have negligible effect on the  determination of prices in the RT market. \blue{Naturally, this assumption may need to be reexamined for markets scenarios in which the aggregate capacity of participating wind power producers is large. We refer the reader to several recent papers \cite{Dent2011,Pag2014}, which treat the possibility of correlation between imbalance prices and wind power in simpler settings without energy storage. }

Finally, we make the following technical assumption.
\begin{assumption} \label{ass:price}
The DA market price  satisfies  $p \leq m_{\alpha}$.
\end{assumption}
\blue{ %Assumption \ref{ass:price} will facilitate analysis and clarity of exposition in the sequel. 
From a technical perspective,  such an assumption ensures concavity of the WPP's expected profit
function \eqref{eq:profit} in the forward contract $x$. More practically, this assumption eliminates the perverse incentive for the  
 WPP to offer larger forward  contracts in the DA market with
the explicit intention of underproducing in the RT market relative to the offered forward contract.
}

%We treat  the  imbalances prices $(\alpha, \beta)$ as \emph{random} and \emph{unknown} at the time of the forward contract offering -- not being revealed until the initial delivery period ($k=0$).  Their expected values at the time of forward contract offering are denoted by
%\begin{align*}
%m_{\alpha} = \mathbb{E}[\alpha] \ \ \text{and} \ \ m_{\beta}  =  \mathbb{E}[\beta].
%\end{align*}
%The imbalance prices $(\alpha, \beta)$ are assumed to be independent of the intermittent supply process $\v{\xi}$. Again, this is natural in the event that the supplier's capacity is small relative to the market size, as the supplier's realized production will have negligible effect on the market clearing price. We make the following technical assumption. 

%----------------------------------------------
\section{Problem Formulation} 

\label{sec:prob}

 Working within this idealized setting, we now formalize the question of how a generator with intermittent supply might optimize a forward contract offering for energy given a subsequent sequence of recourse opportunities to reshape the realized supply profile using a constrained energy storage device. Building on intermediary results characterizing the structure of the optimal value function, the eventual goal is a parametric sensitivity analysis yielding an explicit characterization of the \emph{marginal value of energy storage capacity}.  We begin by characterizing the space of admissible, causal storage control policies.

\subsubsection*{Admissible Control Policies}
An admissible \emph{storage control policy} $\pi = (\mu_0, \dots, \mu_{N-1})$ is any finite sequence of decision functions that causally map from the available information to actions, and respect constraints on both the input to and state of storage. 
We define the \emph{system state} at period $k$ as the pair $(z_k, \xi_k) \in \Rset_+ \times \Rset_+$, where we recall that $z_k$ represents the energy storage state just preceding period $k$, while $\xi_k$ denotes the intermittent supply realized during period $k$. We assume \emph{perfect state feedback} and consider control policies with \emph{full information history}. Namely, the information available to any controller at time $k$ is the vector $I_k := (x, \ z_{\leq k}, \ \xi_{\leq k})$, where $z_{\leq k} = (z_0, \dots, z_k)$ and $\xi_{\leq k} = (\xi_0, \dots, \xi_k)$. Naturally, we  allow the control policy to depend explicitly on the forward contract $x$ . A control policy $\pi$ thus defines the map
\begin{align*}
u_k = \mu_k(I_k) \quad \text{for } \ k=0,1,\dots,N-1,
\end{align*}
where $u_k \in \Rset$ is the \emph{input} to the storage system at time $k$.

 We now characterize  the space of admissible control policies, as determined by the storage type  $\theta = (b,r)$. We define the \emph{feasible state space} $\Zcal(b)$ as the set of all energy storage states respecting the energy capacity constraint. Namely, $\Zcal(b) = \{ z \in \Rset_+ \ | \  0 \leq z  \leq b \}$. Given an energy storage state  $z \in \Zcal(b)$, we define the corresponding \emph{feasible input space}  as the set of all inputs belonging to 
\[ \Ucal(z; \theta) \ = \ \{ u \in \Rset \ | \  z - u \in \Zcal(b), \ \ |u| \leq r \}, \]
which guarantees one-step state feasibility and input rate constraint satisfaction.

\begin{definition}[Admissible policies] A control policy $\pi = (\mu_0, \dots, \mu_{N-1})$ is deemed \emph{admissible} if 
\[ \mu_k(I_k) \in \Ucal(z_k; \theta)   \]
almost surely for all $I_k$ and $k=0,\dots, N-1$.
We denote by $\Pi(\theta)$ the space of all admissible control policies with full information history.
\end{definition}

\subsubsection*{Criterion}
We define the \emph{expected profit}  $J^{\pi}(x; \theta)$ derived by a supplier, with a storage of type $\theta$, as the revenue derived from a forward contract offering $x$ less the expected imbalance cost incurred under an admissible storage control policy $\pi \in \Pi(\theta)$. More precisely, we define the expected profit as
\begin{align}
\label{eq:profit}
J^{\pi}(x; \theta) \ = \   N \cdot px  \ - \ \mathbb{E}\left[ \ \sum_{k=0}^{N-1} g(x, u_k^{\pi}, \xi_k) \ \right],
\end{align}
where expectation is taken with respect to $(\alpha, \beta, \v{\xi})$ and $g(x, u_k^{\pi}, \xi_k)$ denotes the imbalance cost realized at each time  period $k$. More precisely, we have
\begin{align}
g(x, u, \xi) \ = \  \alpha \left( x- \xi - u \right)^+ \ + \ \beta \left(\xi + u  - x \right)^+.
\end{align}
Notice that the stage cost $g$ is indeed a convex function of its arguments. 
For notational concision, we suppress the dependency of $g$ on the imbalance prices $(\alpha, \beta)$. In addition, we will occasionally write the storage state and control processes as  $\{z_k^{\pi}\}$ and $\{u_k^{\pi}\}$ to emphasize their dependence on the storage control policy $\pi$.

 We wish to characterize forward contract offerings $x$ and control policies $\pi$ that together yield a maximum expected profit. This amounts to the solution of a \emph{two-stage stochastic program}, where the recourse problem constitutes a constrained stochastic control problem. Problem optimality is defined as follows.

\begin{definition}[Optimality]  \label{def:opt} An admissible pair  $(\pi^*, x^*) \in \Pi(\theta) \times \Rset$ is deemed optimal if
\[ J^{\pi^*}(x^*; \theta) \ \geq  \ J^{\pi}(x; \theta) \quad \text{for all}  \ \  (\pi, x) \in \Pi(\theta) \times \Rset. \]
We will occasionally write the optimal value function and an optimal solution pair as $J^*(\theta)$ and $(\pi^*(\theta), x^*(\theta))$, respectively, to emphasize their parametric dependency on the storage type parameter $\theta$. 

\end{definition}

The optimal forward contract may be non-unique. In order to avoid technical issues associated with such non-uniqueness,  
we will  restrict our attention to smallest contract among all optimal contracts for each $\theta$. Specifically,  define the minimal optimal contract as $x^*(\theta) = \inf \Xcal^*(\theta)$, where $\Xcal^*(\theta) = \{ x \in \Rset \ | \  J^{\pi^*(\theta)}(x; \theta)  \geq J^{\pi^*(\theta)}(y; \theta)  \quad \forall \  y \in \Rset\}$ denotes the set of all optimal contracts associated with a storage type $\theta$.

\section{Optimal Contract Properties} \label{mainresults}

We now characterize the optimal storage control policy and establish concavity of the expected profit criterion in the forward contract $x$.

\subsection{Optimal Contract Sizing for $b = 0$}
Consider first the special case of optimal contract sizing in the absence of storage, i.e. $b=0$. Naturally, in the absence of storage capacity, the set of admissable storage control policies is identically zero. And the problem of selecting a forward contract to maximize the expected profit reduces to a so-called \emph{newsvendor problem} \cite{petruzzi}. The  convexity of this optimization problem is guaranteed under our postulated assumptions. We have the following result established in \cite{bitar_trans, Dent2011}.
\begin{lemma} 
\label{lem:quantile} Consider a storage type $\theta \in \Rset^2_+$ with $b=0$.
The corresponding optimal contract  is given by the quantile
\begin{align} \label{eq:optimalcontract1}
  x^*(\theta) =  F^{-1}\left(  \gamma \right) := \inf \{ x \in \Rset \ | \ F(x) \geq \gamma\},
 \end{align}
where   $\gamma := (p+m_{\beta})/(m_{\alpha}+m_{\beta}) \in [0,1]$.   
\end{lemma}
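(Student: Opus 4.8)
The plan is to exploit the fact that setting $b=0$ collapses the recourse problem entirely. With no energy capacity, the feasible input space satisfies $\Ucal(z;\theta) = \{0\}$, so the unique admissible policy is $\mu_k \equiv 0$ and the two-stage program degenerates to the one-dimensional maximization of $J(x) := J^{\pi}(x;\theta)$ over $x \in \Rset$ with $u_k \equiv 0$. First I would substitute $u_k = 0$ into the stage cost $g$ and invoke the assumed independence of $(\alpha,\beta)$ from $\v{\xi}$ to carry out the expectation over the imbalance prices, obtaining
\begin{align*}
J(x) \ = \ N p x \ - \ \sum_{k=0}^{N-1}\Big( m_{\alpha}\, \mathbb{E}\big[(x-\xi_k)^+\big] \ + \ m_{\beta}\, \mathbb{E}\big[(\xi_k - x)^+\big]\Big).
\end{align*}

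Next I would establish concavity and reduce to a first-order condition. Each map $x \mapsto (x-\xi_k)^+$ and $x \mapsto (\xi_k-x)^+$ is convex, so each expectation is convex in $x$; since $m_{\alpha}, m_{\beta}\ge 0$, the function $J$ is a linear term minus a nonnegatively weighted sum of convex functions, hence concave. Using $\tfrac{d}{dx}\mathbb{E}[(x-\xi_k)^+] = \Phi_k(x)$ and $\tfrac{d}{dx}\mathbb{E}[(\xi_k-x)^+] = -(1-\Phi_k(x))$ together with the definition of $F$, I would compute
\begin{align*}
J'(x) \ = \ N(p+m_{\beta}) \ - \ (m_{\alpha}+m_{\beta})\sum_{k=0}^{N-1}\Phi_k(x) \ = \ N\Big[(p+m_{\beta}) - (m_{\alpha}+m_{\beta})\,F(x)\Big].
\end{align*}
Setting $J'(x)=0$ gives the optimality condition $F(x)=\gamma$ with $\gamma = (p+m_{\beta})/(m_{\alpha}+m_{\beta})$, and by concavity any such $x$ is a global maximizer; taking the infimum over the set of maximizers yields the claimed quantile $x^*(\theta) = F^{-1}(\gamma) = \inf\{x : F(x)\ge\gamma\}$. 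It remains to check that $\gamma$ is a valid quantile level: $\gamma \ge 0$ is immediate from $p, m_{\beta}\ge 0$, while $\gamma \le 1$ is precisely Assumption \ref{ass:price} ($p\le m_{\alpha}$), which is also what guarantees $J'$ is nonincreasing (equivalently $J$ concave).

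The main subtlety I expect is that $F$ need not be strictly increasing, so $J'$ may fail to vanish exactly and the set of maximizers can be an interval or be pinned down by a flat segment of $F$. I would handle this by replacing the stationarity argument with a subdifferential characterization: since $J$ is concave, $x^*$ is optimal if and only if $0$ belongs to its superdifferential, which translates into the two-sided inequality $F(x^{*-}) \le \gamma \le F(x^{*+})$, and the smallest point satisfying it is exactly $\inf\{x : F(x)\ge\gamma\}$. Under the absolute-continuity hypothesis on the joint law of $\v{\xi}$ noted in the introduction, $F$ is continuous and this reduces to the clean equality $F(x^*)=\gamma$, recovering \eqref{eq:optimalcontract1}.
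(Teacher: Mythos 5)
Your proposal is correct, and it is the standard newsvendor derivation: the paper itself does not prove Lemma \ref{lem:quantile} but defers to \cite{bitar_trans, Dent2011}, where essentially this same argument (collapse to $u_k\equiv 0$, concavity of the reduced objective, first-order/superdifferential condition $F(x)=\gamma$, infimum over the maximizer set) is carried out. One small misstatement: Assumption \ref{ass:price} is not what makes $J$ concave --- concavity already follows from $m_{\alpha},m_{\beta}\geq 0$, exactly as you argue in the preceding sentence; what $p\leq m_{\alpha}$ buys is $\gamma\leq 1$, i.e.\ that $J'$ eventually becomes nonpositive so a finite maximizer exists and the quantile is well defined.
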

The quantile structure of the optimal contract will prove essential to characterizing the marginal value of storage at the origin, which we present in Theorem \ref{thm:mv}.
 
 \blue{
 \begin{remark}[Supply Function Offer] \label{rem:supplyfunc}
 It is also worth noting that the optimal contract specified in \eqref{eq:optimalcontract1} 
 is a monotone nondecreasing function  in the DA market price $p$. Hence, the optimal contract can be equivalently interpreted as a \emph{supply function offer} in the DA market, which indicates the maximum amount of energy
that the WPP is willing to produce given a price $p$. 
Accordingly, all of the results presented in this paper can be shown to hold for the more general setting in which the WPP does not have explicit knowledge of the DA market price, but rather offers a supply function into the DA market, which specifies the amount it is willing to produce as a function of price. It is important to note that the validity of this interpretation is reliant upon the assumption that the
WPP behaves as a price taker in the DA market, which ensures that it has
no influence on the determination of the DA market price.
\end{remark}
}

\subsection{Optimal Contract Sizing for $b > 0$}
 In the presence of positive storage capacity, $b >0$, the selection of an optimal forward contract will naturally depend on the storage type implicitly through the choice of optimal control policy $\pi^* \in \Pi(\theta)$. We now characterize the optimal control policy in Proposition \ref{prop:optpolicy}.

\begin{proposition}[Optimal control policy] \label{prop:optpolicy} Given a storage type $\theta \in \Rset^2_+$,  the optimal control policy $\pi^* = (\mu^*, \dots, \mu^*) \in \Pi(\theta)$ is  (i) \emph{myopic} and (ii) of a  \emph{threshold-type} satisfying
\begin{align} \label{eq:optpolicy}
\mu^*(x,z, \xi) = \begin{cases} \hspace{.14in} \min\{x - \xi, \;  z, \; r \} , & \xi \leq x  \\  
-  \min\{ \xi -x, \; b- z, \; r \}, & \xi  > x  \end{cases}  
\end{align}
for all $(x,z, \xi) \in \Rset_+ \times \Zcal(b) \times \Rset_+$.
\end{proposition}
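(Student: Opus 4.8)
The plan is to solve the recourse problem by dynamic programming and to show that the per-stage–optimal (greedy) action is also optimal over the whole horizon; the usual obstruction to such a claim is that decisions are coupled through the storage state. First I would condition on the realized imbalance prices $(\alpha,\beta)$: since these are independent of $\v{\xi}$ and constant across the horizon, it suffices to prove the claim for an arbitrary fixed pair $\alpha,\beta\ge 0$ and then take expectations, and the resulting policy will turn out not to depend on $(\alpha,\beta)$. For fixed $(x,\alpha,\beta)$ let $V_k(z)$ be the optimal expected cost-to-go from period $k$ at state $z$, so $V_N\equiv 0$ and
\begin{align*}
V_k(z) = \mathbb{E}_{\xi_k}\Big[ \min_{u\in\Ucal(z;\theta)}\big\{ g(x,u,\xi_k)+V_{k+1}(z-u)\big\}\Big].
\end{align*}
A direct computation gives $\Ucal(z;\theta)=[\max\{z-b,-r\},\,\min\{z,r\}]$, an interval, and for fixed $\xi$ the stage cost $g(x,\cdot,\xi)$ is convex and piecewise linear in $u$ with slope $-\alpha$ to the left and $+\beta$ to the right of its unconstrained minimizer $d:=x-\xi$ (where the imbalance vanishes). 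Projecting $d$ onto $\Ucal(z;\theta)$ reproduces exactly the threshold formula \eqref{eq:optpolicy}, which already establishes that $\mu^*$ minimizes the single-stage cost and is admissible.

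The heart of the argument is the inductive invariant that $V_{k+1}$ is convex with $\partial V_{k+1}(z)\subseteq[-\alpha,\beta]$ for all $z$ — the very two slopes appearing in $g$. Granting this, consider the Bellman objective $h(u):=g(x,u,\xi)+V_{k+1}(z-u)$, whose one-sided derivative is $\partial_u g - V_{k+1}'(z-u)$. For $u<d$ one has $\partial_u g=-\alpha$, so $h'(u)\le -\alpha-(-\alpha)=0$, and for $u>d$ one has $\partial_u g=+\beta$, so $h'(u)\ge \beta-\beta=0$. Hence $h$ is nonincreasing up to $d$ and nondecreasing thereafter, and its minimizer over the interval $\Ucal(z;\theta)$ is again the projection of $d$, namely $\mu^*(x,z,\xi)$. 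Thus the minimizer of the full cost-to-go coincides with the minimizer of the stage cost: the optimal action ignores both history and future, yielding (i) the myopic property, and — since the projection formula is independent of $k$ — (ii) the stationary threshold form $\pi^*=(\mu^*,\dots,\mu^*)$.

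It remains to close the induction by showing $V_k$ inherits the invariant. Convexity is routine: $g(x,u,\xi)+V_{k+1}(z-u)$ is jointly convex in $(z,u)$ over the polyhedral set $\{(z,u):u\in\Ucal(z;\theta)\}$, and partial minimization in $u$ preserves convexity in $z$. For the derivative bound I would substitute the now-optimal $\mu^*$ and bound $\tfrac{d}{dz}\big[g(x,\mu^*,\xi)+V_{k+1}(z-\mu^*)\big]$ region by region, according to which of the three constraints — the target $d$, the rate $r$, or the capacity bounds $0$ and $b$ — is active. In the unconstrained and rate-limited regions the next state obeys $dz'/dz=1$ with locally constant stage cost, so the derivative equals $V_{k+1}'(z')\in[-\alpha,\beta]$; in the capacity-limited regions $z'$ is pinned at $0$ or $b$, so $V_{k+1}$ contributes nothing and the marginal stage cost is exactly $-\alpha$ (storage empty) or $+\beta$ (storage full). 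In every case the one-sided derivatives of $V_k=\mathbb{E}_\xi[\cdot]$ remain in $[-\alpha,\beta]$, completing the induction and, after taking expectations over $(\alpha,\beta)$, the proof. I expect this last step — propagating the marginal-value bound $V_k'\in[-\alpha,\beta]$ through the interacting rate and capacity constraints — to be the main technical obstacle; the conceptual crux is recognizing that this bound, matching the slopes of the imbalance cost, is precisely the condition under which the future-cost term can never override the myopic choice.
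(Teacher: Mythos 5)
Your argument is correct and is precisely the dynamic-programming verification that the paper gestures at but omits (the text says only that optimality ``can be shown by direct inspection of the corresponding dynamic programming equations''). The two load-bearing steps are right: (a) the projection of the unconstrained target $d=x-\xi$ onto the interval $\Ucal(z;\theta)=[\max\{z-b,-r\},\min\{z,r\}]$ reproduces \eqref{eq:optpolicy} exactly, and (b) the inductive invariant that the cost-to-go is convex with subgradients in $[-\alpha,\beta]$ — matching the two slopes of $g$ — is exactly what prevents the future-cost term from ever moving the minimizer off that projection; your region-by-region check (derivative $V_{k+1}'(z')$ when the target or rate constraint is active, $-\alpha$ when the storage empties, $+\beta$ when it saturates) closes the induction, and the conditioning on $(\alpha,\beta)$ is legitimate since the resulting policy is price-independent. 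One caveat: the paper does not assume $\v{\xi}$ is independent across time (only that its joint distribution is absolutely continuous), so the Bellman recursion cannot be written with a value function depending on $z$ alone as you have it; you must carry the history $\xi_{\leq k}$ as an argument, $V_{k+1}(z',\xi_{\leq k})=\mathbb{E}\bigl[W_{k+1}(z',\xi_{\leq k+1})\mid \xi_{\leq k}\bigr]$. This is a cosmetic rather than substantive repair, because your subgradient bound holds uniformly over histories (the case analysis is pathwise and conditional expectation preserves it), so the minimizer of $g(x,u,\xi_k)+V_{k+1}(z-u,\xi_{\leq k})$ is still the history-independent projection of $d$ — which is what makes the policy myopic even for correlated wind.
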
 
We omit the proof of Proposition \ref{prop:optpolicy}, as optimality of the  control policy \eqref{eq:optpolicy} can be shown by direct inspection of the corresponding dynamic programming equations.

While we do not offer an explicit expression for the optimal contract size  
in this more general setting, we establish in Theorem \ref{thm:convex1} concavity of the expected profit criterion $J^{\pi^*}(x;\theta)$  in the contract size $x$, under the optimal control policy $\pi^*$ specified in \eqref{eq:optpolicy}.  See Appendix \ref{app:convex1} for a proof of Theorem \ref{thm:convex1}.

\begin{theorem}[Convexity of optimal contract sizing]  \label{thm:convex1}
Let $\pi^* \in \Pi(\theta)$ denote the optimal control policy associated with a storage of type $\theta$ and a particular forward contract $x \in \Rset_+$. Then, the expected profit $J^{\pi^*}(x;\theta)$ is a concave function in $x$ over $\Rset_+$. 
\end{theorem}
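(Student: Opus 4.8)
The plan is to reduce the claim to the convexity of the \emph{optimal} expected imbalance cost and then to establish that convexity through a pathwise interpolation of storage trajectories. Since the control policy $\pi^*$ of Proposition \ref{prop:optpolicy} is optimal for each fixed contract $x$, the expected profit under $\pi^*$ satisfies $J^{\pi^*}(x;\theta) = N p x - \mathcal{C}(x)$, where $\mathcal{C}(x) := \inf_{\pi \in \Pi(\theta)} \mathbb{E}\big[\sum_{k=0}^{N-1} g(x, u_k^{\pi}, \xi_k)\big]$ denotes the minimal expected imbalance cost. The term $N p x$ is linear, hence concave, so it suffices to prove that $\mathcal{C}$ is convex on $\Rset_+$.

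To this end, I would fix $x_1, x_2 \in \Rset_+$ and $\lambda \in [0,1]$, and set $x_\lambda = \lambda x_1 + (1-\lambda) x_2$. Let $\pi^1, \pi^2 \in \Pi(\theta)$ be optimal policies for $x_1$ and $x_2$, producing, on each realization of $\v{\xi}$, the admissible control/state trajectories $\{(u_k^1, z_k^1)\}$ and $\{(u_k^2, z_k^2)\}$. I would then define the interpolated processes $u_k^\lambda := \lambda u_k^1 + (1-\lambda) u_k^2$ and $z_k^\lambda := \lambda z_k^1 + (1-\lambda) z_k^2$. Because the storage dynamics \eqref{stor_dynam_disc} are affine and $z_0^1 = z_0^2 = 0$, the interpolated pair again satisfies $z_{k+1}^\lambda = z_k^\lambda - u_k^\lambda$ with $z_0^\lambda = 0$; and since the state/input constraints \eqref{con1}--\eqref{con2} define a convex region, the interpolated trajectory remains feasible, i.e. $z_k^\lambda \in [0,b]$ and $|u_k^\lambda| \le r$, so that $u_k^\lambda \in \Ucal(z_k^\lambda; \theta)$ for every $k$. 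Thus $\{u_k^\lambda\}$ defines an admissible policy $\pi^\lambda \in \Pi(\theta)$ for the contract $x_\lambda$.

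With admissibility in hand, I would invoke the joint convexity of the stage cost $g$ in $(x,u)$ noted after \eqref{eq:profit}: for each realization of $(\alpha, \beta, \xi_k)$ one has
\[ g(x_\lambda, u_k^\lambda, \xi_k) \ \le \ \lambda\, g(x_1, u_k^1, \xi_k) + (1-\lambda)\, g(x_2, u_k^2, \xi_k). \]
Summing over $k$, taking expectations, and combining the suboptimality of $\pi^\lambda$ for $x_\lambda$ with the optimality of $\pi^1, \pi^2$ then yields $\mathcal{C}(x_\lambda) \le \lambda\, \mathcal{C}(x_1) + (1-\lambda)\, \mathcal{C}(x_2)$, which establishes convexity of $\mathcal{C}$ and hence concavity of $J^{\pi^*}(\cdot\,; \theta)$.

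The main obstacle is the admissibility step: I must confirm that $\{u_k^\lambda\}$ is a genuine \emph{causal feedback} policy and not merely a feasible open-loop sequence. This is where the full-information-history assumption is essential --- since $u_k^1$ and $u_k^2$ are measurable functions of $(x, \xi_{\leq k})$, so is their convex combination, and the state it induces under the dynamics \eqref{stor_dynam_disc} is exactly $z_k^\lambda$; hence $\pi^\lambda$ is a legitimate element of $\Pi(\theta)$ running on its own trajectory. A secondary technical point is the measurability and integrability needed to carry the pathwise inequality under the expectation, which follow from $\xi_k \in [0,1]$ and the finiteness of $m_{\alpha}, m_{\beta}$. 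As an alternative route, one could instead prove joint convexity of the dynamic programming value function $V_k(z,x)$ in $(z,x)$ by backward induction, using that partial minimization of a jointly convex function over the convex constraint set $\{(z,u,x) : u \in \Ucal(z;\theta)\}$ preserves convexity; this avoids reasoning about interpolated policies at the cost of invoking the partial-minimization lemma.
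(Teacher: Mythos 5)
Your proposal is correct and follows essentially the same route as the paper's own proof: interpolate the optimal control and state trajectories for $x_1$ and $x_2$ pathwise, use convexity of the state/input constraint sets to get admissibility of the interpolated policy, then apply convexity of the stage cost $g$ together with suboptimality of the interpolated policy for $x_\lambda$. The only differences are cosmetic — you factor out the linear revenue term and argue via the optimal imbalance cost $\mathcal{C}(x)$, and you make explicit the causality/measurability point that the paper leaves implicit.
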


It follows from Theorem \ref{thm:convex1} that an optimal contract can be computed by solving a finite-dimensional, unconstrained convex optimization problem given by $\sup_{x \in \Rset} \left\{ J^{\pi^*}(x;\theta) \right\}.$

\section{The Marginal Value of Storage}
\label{sec:marval}

\blue{As the cost required to deploy a storage facility can be large, it is of vital importance to quantify the fiscal benefit that a 
wind power producer (WPP) might derive from an initial investment in energy storage capacity.}
%Hence, in order to accurately amortize the the capital investment in storage capacity over a period of time, it is of vital importance to quantify the fiscal benefit of energy storage capacity to the supplier. Such a relation can then be used to optimally size the storage system so as to maximize return on investment. 
Theorem \ref{thm:convex1} shows that the problem of computing optimal contract offerings and the corresponding optimal expected profit is a convex program.
In the following Theorem \ref{result2}, we show that the optimal expected profit function $J^*(\theta)$  is concave and nondecreasing in the storage type  $\theta = (b,r)$.

\thm{\label{result2} The maximum expected profit $J^*(\theta)$ is concave and nondecreasing in the  storage type parameter $\theta \in \Rset_+^2$.}

We refer the reader to Appendix \ref{app:result2} for a proof of Theorem \ref{result2}.
The consequences of the Theorem \ref{result2} are twofold. \blue{First, consider the problem of optimal storage sizing $ \sup_{\theta \in \Rset_+^2}  \left\{J^*(\theta) - C(\theta)\right\}$, where $C(\theta)$ denotes the capital cost of energy storage capacity. If we assume a convex capital cost function\footnote{This is a standard assumption in the literature.  In fact, it is common to assume, more strongly, that the capital cost of energy storage capacity is linear \cite{cavallo2, cast}.}, then Theorem \ref{result2} reveals that the problem of \emph{optimal storage sizing reduces to a finite-dimensional, convex optimization problem}.} Second, the concavity and monotonicity of the maximum expected profit function $J^*(\theta)$ in the storage type $\theta$ shows that the \emph{marginal value of storage capacity  is greatest  for  initial investments in storage capacity}. In Theorem \ref{thm:mv},  we  provide a closed-form expression for the marginal value of energy storage capacity $\partial J^{*}(\theta) / \partial b$ at the origin $(b=0)$.

\subsection{$\gamma$-Quantile Level Crossings}
In Theorem \ref{thm:mv}, we make precise the intuition that  a larger variation in the intermittent supply process will manifest in a larger value of storage. In particular, we establish an explicit relationship between the   marginal value of storage capacity at the origin and a specific measure of variation of the underlying intermittent supply process. Before stating our main result, we first establish a preliminary result in Lemma \ref{lem:avg_dev}, which quantifies the expected number of times a stochastic process exceeds a fixed level over a fixed interval of time.  We have the following definition.

\begin{figure}[htb!]
\begin{center}
\includegraphics[width = 0.845 \linewidth]{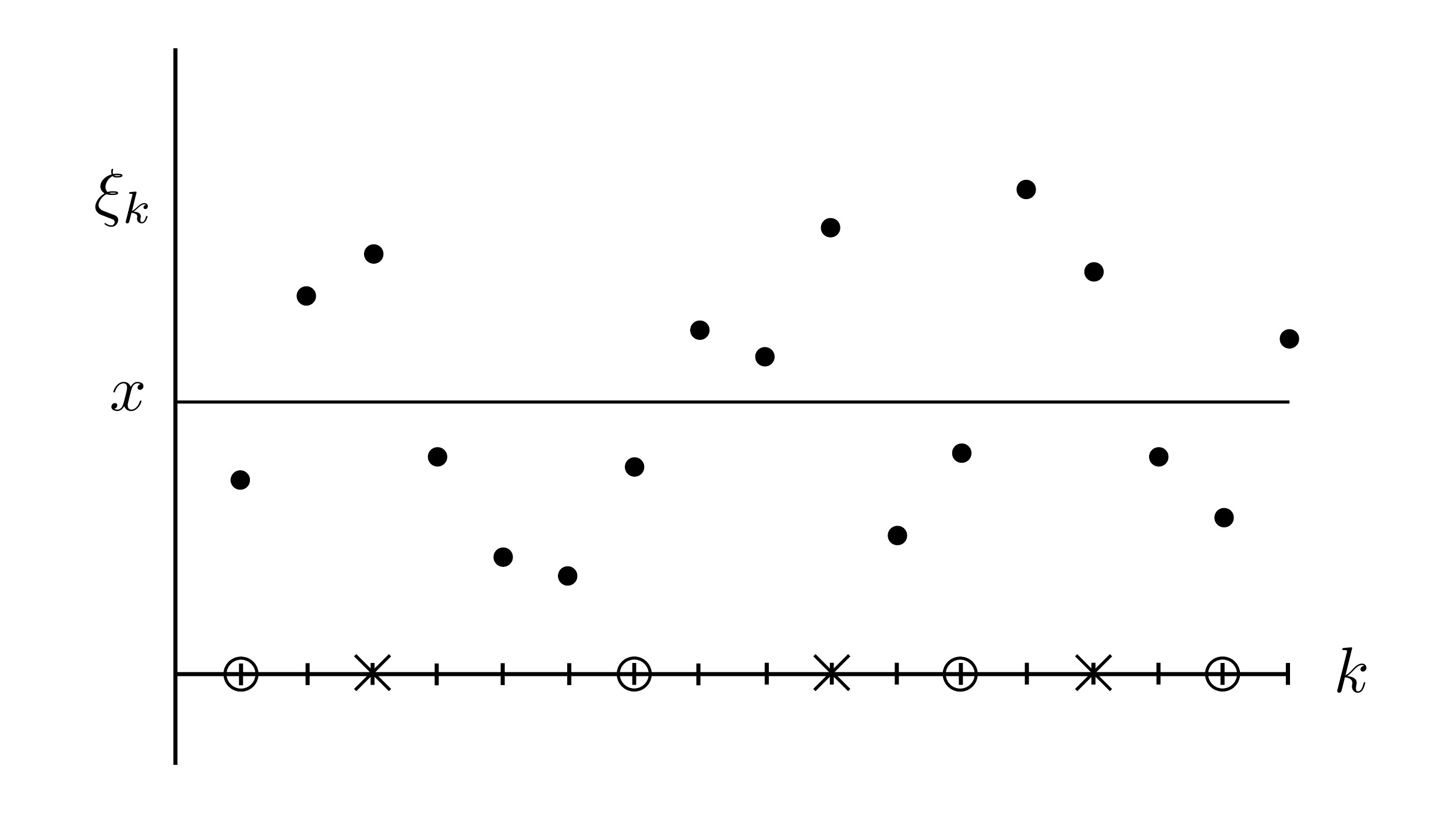}
\caption{A graphical illustration of the times at which a process $\v{\xi} = \{\xi_k\}$ exhibits strict upcrossings ($\circ$) and strict downcrossings ($\times$) of a fixed level $x$. For this example,  we have that $\Lambda_N(x, \v{\xi}) = 3$. }
\label{fig:cross}
\end{center}
\end{figure}

\begin{definition}[Strict level crossing] \label{def:strictcross} A scalar sequence $\v{a} = (a_0, \dots, a_{N-1})$ is said to have a \emph{strict downcrossing} of the level $x \in \Rset$ at time $k$ if $a_k >  x$ and $a_{k+1} < x$. Thus, a strict downcrossing of $x$ at time $k$ corresponds to the event $x \in \Dcal_k(\v{a})$, where we define $\Dcal_k(\v{a}) = \{ y \in \Rset \ | \ a_{k} > y > a_{k+1} \}$. Analogously, the sequence $\v{a}$ is said to have a \emph{strict upcrossing} of the level $x$ at time $k$ if  $x \in \Ucal_k(\v{a})$, where we define $\Ucal_k(\v{a}) = \{ y \in \Rset \ | \ a_k < y < a_{k+1} \}$.
\end{definition}
See Figure \ref{fig:cross} for a graphical illustration of Definition \ref{def:strictcross}.
We make the following technical assumption in order to restrict our attention 
 to \emph{strict} level crossings.
\begin{assumption} \label{assum:cont}
The joint distribution of the intermittent supply process $\v{\xi}$ is assumed to be absolutely continuous.
\end{assumption}
Under Assumption \ref{assum:cont},  sample paths of the process $\v{\xi}$ are, with probability one, not identically equal to $x \in \Rset$ for any $k$. More precisely,  we have that $\mathbb{P} ( \bigcup_{k} \{ \xi_k = x \} ) \leq \sum_{k} \mathbb{P} \left\{  \xi_k = x \right\} =  0$.
Henceforth, we shall refer to all strict crossings as crossings, unless otherwise unclear from the context.

\begin{definition}[Number of Strict Downcrossings]  \label{def:cross} We denote the number of strict downcrossings of $x \in \Rset$ incurred by a scalar sequence $\v{a} = (a_0, \dots, a_{N-1})$  on the interval $\{0, \dots, N-1\}$ by
\begin{align} \label{eq:cross}
\Lambda_N(x, \mathbf{a}) \ := \ \sum_{k=0}^{N-2}  \mathbf{1}_{\Dcal_k( \mathbf{a})}(x).  
\end{align}
We will omit the subscript $N$ when it is clear from the context.
\end{definition}

 We have the following Lemma characterizing the  number of times the intermittent supply process is expected to strictly exceed, or fall below the quantile $F^{-1}(\gamma)$. First, define
\begin{align*}
\mathcal{K}^+(x, \v{\xi})  & := \{ 0 \leq k  \leq N-1 \ | \ \xi_k > x \}, \\ 
\mathcal{K}^-(x, \v{\xi})  & := \{ 0 \leq k  \leq N-1 \ | \ \xi_k < x \}
\end{align*}
as the number of times which the supply \emph{strictly exceeds}, and \emph{strictly falls below} the level $x \in \Rset$, respectively.

\begin{lemma} \label{lem:avg_dev} Let $\theta =0$ and denote by $x^* \in \Rset_+$ the corresponding optimal contract.  The following properties hold:
\vspace{.0001in}
\begin{enumerate}[(i)] \setlength{\itemsep}{.07in}
\item $ \mathbb{E}|\mathcal{K}^-(x^*, \v{\xi}) | = N \gamma$
\item $ \mathbb{E}|\mathcal{K}^+(x^*, \v{\xi}) | = N (1-\gamma)$
\item $ | \mathcal{K}^-(x^*, \v{\xi})  |  \  + \ | \mathcal{K}^+(x^*, \v{\xi})  | \ = \ N$, almost surely,
\end{enumerate}
\vspace{.1in}
\noindent where $\gamma := (p + m_{\beta})/(m_{\alpha} + m_{\beta})$.
\end{lemma}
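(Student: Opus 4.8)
The plan is to express each cardinality as a sum of indicator random variables and exploit linearity of expectation together with the quantile characterization of $x^*$ furnished by Lemma \ref{lem:quantile}. The key enabling fact throughout is that, under Assumption \ref{assum:cont}, each marginal $\Phi_k$ is continuous (the marginals of an absolutely continuous joint law are themselves absolutely continuous, hence continuous), and consequently the time-averaged distribution $F = \tfrac1N\sum_k \Phi_k$ is continuous as well.

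First I would dispatch part (iii), since it is the cleanest and feeds the others. Writing
\[
|\mathcal{K}^-(x^*, \v{\xi})| + |\mathcal{K}^+(x^*, \v{\xi})| = \sum_{k=0}^{N-1}\left(\mathbf{1}\{\xi_k < x^*\} + \mathbf{1}\{\xi_k > x^*\}\right) = N - \sum_{k=0}^{N-1}\mathbf{1}\{\xi_k = x^*\},
\]
I would invoke the union-bound argument already recorded below Assumption \ref{assum:cont}, namely $\mathbb{P}\big(\bigcup_k \{\xi_k = x^*\}\big) \le \sum_k \mathbb{P}\{\xi_k = x^*\} = 0$, to conclude that the trailing sum vanishes almost surely. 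This yields $|\mathcal{K}^-| + |\mathcal{K}^+| = N$ almost surely.

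For part (i), I would write $|\mathcal{K}^-(x^*, \v{\xi})| = \sum_{k=0}^{N-1}\mathbf{1}\{\xi_k < x^*\}$ and take expectations term by term, using $\mathbb{P}(\xi_k < x^*) = \mathbb{P}(\xi_k \le x^*) = \Phi_k(x^*)$, where the first equality again uses continuity of $\Phi_k$. Summing over $k$ and inserting the definition of $F$ gives $\mathbb{E}|\mathcal{K}^-(x^*,\v{\xi})| = N F(x^*)$. Part (ii) then follows either by the identical computation applied to $\mathbf{1}\{\xi_k > x^*\}$, which produces $N(1 - F(x^*))$, or more economically by taking expectations in the almost-sure identity of part (iii) and subtracting (i). In either route it remains only to establish the exact identity $F(x^*) = \gamma$.

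The main obstacle is precisely this exact identity. The quantile $x^* = F^{-1}(\gamma) = \inf\{x \mid F(x) \ge \gamma\}$ supplied by Lemma \ref{lem:quantile} guarantees only $F(x^*) \ge \gamma$ in general, and equality can fail when $F$ has a jump; continuity of $F$ is exactly what rules this out. Concretely, the sublevel set $\{x \mid F(x) \ge \gamma\}$ is closed by continuity, so it contains its infimum $x^*$, whence $F(x^*) \ge \gamma$; conversely $F(x) < \gamma$ for every $x < x^*$ by definition of the infimum, and letting $x \uparrow x^*$ and invoking continuity gives $F(x^*) \le \gamma$, so $F(x^*) = \gamma$. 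I would treat the degenerate boundary cases $\gamma \in \{0,1\}$ with a brief separate remark (there $x^*$ may sit at the edge of the support of $F$), but the same continuity argument covers them. With $F(x^*) = \gamma$ established, parts (i) and (ii) are immediate, completing the proof.
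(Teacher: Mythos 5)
Your proof is correct and follows essentially the same route as the paper: part (iii) via the indicator decomposition and the union bound under Assumption \ref{assum:cont}, part (i) via linearity of expectation and the quantile characterization $x^* = F^{-1}(\gamma)$, and part (ii) as a consequence of the other two. The only difference is that you explicitly justify the identities $\mathbb{P}\{\xi_k < x^*\} = \Phi_k(x^*)$ and $F(x^*) = \gamma$ using continuity of $F$, steps the paper's proof asserts without elaboration; this is a welcome addition but not a different argument.
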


 Lemma \ref{lem:avg_dev} reveals an interesting interpretation of the price ratio $\gamma  \in [0,1]$. Namely, in the absence of energy storage capacity (i.e., $\theta = 0$), the quantile structure of the optimal contract $x^* = F^{-1}(\gamma)$ is such that the  fraction of times at which the intermittent supply is expected to fall short of the contract is precisely equal to $\gamma$.

\begin{proof}[Proof of Lemma \ref{lem:avg_dev}] We first prove part (iii). One can write the sum as 
\[
| \mathcal{K}^-(x^*, \v{\xi})  |    +  | \mathcal{K}^+(x^*, \v{\xi})  |  =   \sum_{k=0}^{N-1}  \v{1}_{(-\infty, x^*)}(\xi_k)   +  \v{1}_{(x^*, \infty)}(\xi_k) .
\]
The result follows, as Assumption \ref{assum:cont} implies that $\v{1}_{(-\infty, x^*)}(\xi_k)  \ + \ \v{1}_{(x^*, \infty)}(\xi_k) = 1$ almost surely for all $k$. We now establish part (i) through the following string of \blue{equalities}:
\begin{align*}
\mathbb{E} |\mathcal{K}^-(x^*, \v{\xi})| &   =     \mathbb{E} \left[ \sum_{k=0}^{N-1}  \ \v{1}_{(-\infty, x^*)}(\xi_k)  \right] \\
& =    \sum_{k=0}^{N-1} \mathbb{P}\{ \xi_k < x^*\} \stackrel{(a)}{=} \ N\cdot F(x^*)   \stackrel{(b)}{=} \ N \gamma.
\end{align*}
Here, equality  (a) follows from the definition of the time averaged distribution $F$, and (b) follows from the fact that $x^* = F^{-1}(\gamma)$ (cf. Lemma \ref{lem:quantile}). Part (ii) is an immediate consequence of parts (i) and (iii), thus completing the proof.
\end{proof}

\subsection{Level Crossings and the Marginal Value of Storage}

We now characterize the marginal value of energy storage capacity at the origin. Essentially, Theorem \ref{thm:mv} reveals that the marginal value of initial investment in energy storage capacity  depends on the statistical variation of supply, as measured through its expected number of strict contract downcrossings. To the best of our knowledge, Theorem \ref{thm:mv} is the first explicit characterization of the value of storage under general distributional assumptions on the intermittent supply process---requiring only that said process have an absolutely continuous joint distribution. The marginal value characterization \eqref{eq:margval} holds for general  nonstationary processes. This is 
a point of practical importance, as the behavior of  wind and solar power processes have been observed to be far from  normal or stationary.  Furthermore, it is straightforward to construct a consistent empirical estimator of the marginal value statistic \eqref{eq:margval} from time series data.

\begin{theorem}[Marginal Value at the Origin] \label{thm:mv} Let $r>0$.  The marginal value of energy storage capacity at the origin $(b=0)$ exists and is given by
\begin{align} \label{eq:margval}
\left. \frac{ \partial J^*(\theta)}{\partial b} \right|_{b=0} \hspace{-.1in} = \ (m_{\alpha} + m_{\beta})  \mathbb{E}[  \Lambda(x^*, \v{\xi})]     +   m_{\beta} \mathbb{P}\{ \xi_{N-1} > x^* \},  
\end{align}
where $x^* = F^{-1}(\gamma)$  \ and \ $\gamma := (p+m_{\beta})/(m_{\alpha} + m_{\beta})$. 
\end{theorem}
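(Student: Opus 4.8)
My goal is to compute the right derivative of $J^*(\theta)$ with respect to $b$ at $b=0$ (with $r>0$ fixed), and show it equals the claimed expression. The natural strategy is a first-order perturbation argument: I would introduce a small storage capacity $b>0$, evaluate the resulting reduction in expected imbalance cost relative to the no-storage optimum, divide by $b$, and let $b \downarrow 0$. The key simplification is that I may freely use the \emph{optimal} control policy $\pi^*$ from Proposition \ref{prop:optpolicy} and the \emph{optimal} no-storage contract $x^* = F^{-1}(\gamma)$ from Lemma \ref{lem:quantile}. The marginal-value bound via contract re-optimization is handled cleanly by an envelope/optimality argument: since $x^*$ is optimal at $b=0$ and $J^{\pi^*}(x;\theta)$ is concave in $x$ (Theorem \ref{thm:convex1}), holding the contract fixed at $x^*$ while increasing $b$ gives a valid lower bound on $J^*(\theta)$, and the contract's first-order sensitivity contributes nothing to the derivative at the optimum. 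So to first order in $b$, I may freeze $x = x^*$ and compute only the cost reduction attributable to the storage dynamics.

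First I would write the cost improvement from using storage as a telescoping comparison between $g(x^*, u_k^{\pi^*}, \xi_k)$ and the no-storage stage cost $g(x^*, 0, \xi_k)$, summed over $k$. With $x=x^*$ fixed and $b$ small, the threshold policy \eqref{eq:optpolicy} charges the storage whenever $\xi_k < x^*$ (surplus) and discharges whenever $\xi_k > x^*$ (deficit), capped by $b$, $z_k$, and $r$. The central observation is that, for $b$ small enough relative to the typical run-lengths of the process, the storage extracts/injects its full available content precisely at \emph{downcrossings} of the level $x^*$: energy charged during a surplus run (when $\xi_k < x^*$) is later discharged to cover the following deficit run (when $\xi_k > x^*$), and the economic value of transferring one marginal unit of energy across such a crossing is $(\alpha + \beta)$ per unit, because it converts an overproduction penalty (rate $\beta$) into a reduction of an underproduction penalty (rate $\alpha$). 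Taking expectations over the independent prices gives the factor $(m_\alpha + m_\beta)$, and counting crossings gives $\mathbb{E}[\Lambda(x^*,\v\xi)]$ — this is where Definition \ref{def:cross} and Lemma \ref{lem:avg_dev} enter.

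Next I would account for the boundary term. The telescoping over $k = 0, \dots, N-1$ leaves an end-of-horizon residual: energy stored but never discharged because the horizon terminates. If $\xi_{N-1} > x^*$ (a terminal deficit with no subsequent slot to which surplus is transferred), the marginal analysis picks up only the overproduction saving of rate $\beta$ on the unit of energy charged just before, contributing $m_\beta \, \mathbb{P}\{\xi_{N-1} > x^*\}$. This precisely matches the second term in \eqref{eq:margval}. I would make this rigorous by bounding, for each $b>0$, the per-unit cost improvement above and below by expressions that both converge to $(m_\alpha + m_\beta)\mathbb{E}[\Lambda(x^*,\v\xi)] + m_\beta \mathbb{P}\{\xi_{N-1} > x^*\}$ as $b \downarrow 0$, invoking the rate constraint $r>0$ to ensure at least one unit's worth of transfer is feasible across each crossing once $b$ is small, and invoking Assumption \ref{assum:cont} so that the events $\{\xi_k = x^*\}$ are null and crossings are unambiguously strict.

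\textbf{The main obstacle.} The hardest part will be the uniform interchange of the limit $b \downarrow 0$ with the expectation, and controlling the ``double-crossing'' events where a surplus run is too short (relative to $b$) for the storage to fill completely before the next deficit, or where multiple rapid crossings interact within a single charge/discharge cycle. For fixed $b>0$ these events complicate the exact accounting, but their contribution to the \emph{difference quotient} must be shown to vanish as $b \to 0$. The clean route is a dominated-convergence argument: the per-slot cost improvement is bounded pathwise by $(\alpha+\beta) b$ (a marginal unit of stored energy can alter the imbalance cost by at most its size times the total price), which after dividing by $b$ yields an integrable dominating function $(\alpha + \beta)(N-1)$, independent of $b$; then I need only establish the pointwise (almost-sure) convergence of the normalized improvement to the correct per-path limit, namely $(\alpha+\beta)\Lambda(x^*,\v\xi) + \beta \,\v 1_{(x^*,\infty)}(\xi_{N-1})$, which holds on the full-measure set where all crossings are strict.
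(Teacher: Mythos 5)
Your core mechanism is the same as the paper's: both arguments reduce the derivative to the almost-sure limit of the pathwise difference quotient of the imbalance cost, identify that limit with $(\alpha+\beta)\Lambda(x^*,\v{\xi}) + \beta\,\v{1}_{(x^*,\infty)}(\xi_{N-1})$ by observing that for small $b$ the threshold policy of Proposition \ref{prop:optpolicy} executes a full charge/discharge cycle exactly at each strict downcrossing, and then pass the limit through the expectation (you via dominated convergence with the dominating variable $N(\alpha+\beta)$, the paper via bounded convergence in Proposition \ref{prop:as}). Where you genuinely diverge is the treatment of the contract's dependence on $b$. The paper does \emph{not} freeze the contract: it tracks $x^*(b)$ explicitly, so the pathwise limit $f(\v{\xi})$ acquires the extra term $\left.\tfrac{\partial x^*(b)}{\partial b}\right|_{b=0}\bigl(\alpha|\Kcal^-(x^*(0),\v{\xi})| - \beta|\Kcal^+(x^*(0),\v{\xi})|\bigr)$, which is then cancelled against the revenue term $pN\left.\tfrac{\partial x^*(b)}{\partial b}\right|_{b=0}$ using the identity $\mathbb{E}[\alpha|\Kcal^-|-\beta|\Kcal^+|]=pN$ from Lemma \ref{lem:avg_dev} and price--supply independence. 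You replace this with an envelope argument. That is a legitimate alternative route, but as stated it has a one-directional gap: concavity in $x$ and optimality of $x^*(0)$ only give you $J^*(\varepsilon)\geq J^{\pi^*(\varepsilon)}(x^*(0);\varepsilon)$, i.e.\ a \emph{lower} bound on the difference quotient. The matching upper bound --- the assertion that ``the contract's first-order sensitivity contributes nothing'' --- is exactly the content that must be proved, and proving it requires either Danskin-type regularity or the right-differentiability of $x^*(b)$ at $b=0$ together with the first-order condition $F(x^*(0))=\gamma$; the latter is precisely the cancellation the paper carries out via Lemma \ref{lem:avg_dev}. So your envelope step, once made rigorous, does not actually avoid that computation.

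Two smaller points. First, your surplus/deficit labels are inverted relative to the model's sign convention: the policy \eqref{eq:optpolicy} \emph{injects} energy when $\xi_k>x^*$ (overproduction) and \emph{extracts} when $\xi_k<x^*$ (shortfall), so the arbitrage cycle is charge-then-discharge across a \emph{downcrossing}; your parentheticals have the two regimes swapped even though your final accounting (value $(\alpha+\beta)$ per downcrossing, residual $\beta$ when $\xi_{N-1}>x^*$ because the terminal charge is never discharged) lands on the correct formula. Second, your pointwise-convergence step still needs the observation that for each fixed path in the full-measure set $\{\xi_k\neq x^*(0)\ \forall k\}$ there is a threshold below which $\varepsilon_n\leq\min\{r,|\xi_k-x^*(\varepsilon_n)|\}$ for all $k$, so that every crossing yields a \emph{complete} $\varepsilon_n$-transfer; this uses $r>0$ and the right-continuity of $x^*(\cdot)$ at $0$, both of which the paper invokes explicitly and you should too.
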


See Appendix \ref{app:margval} for a proof of Theorem \ref{thm:mv}.
 Theorem \ref{thm:mv} has an appealing interpretation. The marginal value of energy storage capacity at the origin  is proportional to the expected number of \emph{energy arbitrage opportunities} -- or, equivalently, the expected number of contract downcrossings. As an illustrative example, consider a system with a small amount of energy  
capacity, $b = \varepsilon >0$, and power capacity, $r > \varepsilon$. Each time the intermittent supply process  crosses the contract from above, one has the opportunity to inject an $\varepsilon$ amount of energy into the storage system to decrement the surplus penalty by $\beta \cdot \varepsilon$. This contract downcrossing event  is also accompanied by the additional opportunity to extract $\varepsilon$ energy from the storage device and thus decrement the shortfall penalty by $ \alpha \cdot \varepsilon$. Clearly then, the total realized  benefit for small storage capacity is roughly equal to $(\alpha + \beta)\cdot \varepsilon$   multiplied by the number of energy arbitrage opportunities. The exact derivation of the marginal value of storage at the origin is more complex, however, as one has to additionally account for the sensitivity of the optimal contract to the storage size.

\blue{
\begin{remark}[Lossy Storage Systems] \label{rem:nonid} Theorem \ref{thm:mv} can be  extended to accommodate inefficiencies in storage. Consider the following generalization of our original storage model:
\beq
\label{eq:storage_loss} z_{k+1}\; =\; \lambda z_k   \; - \; \frac{1}{\eta_{\rm out}} (u_k)^+  \; - \; \eta_{\rm in} (u_k)^- 
\eeq
for $k = 0, \dots, N-1$. Here  the scalar $\lambda \in (0,1]$ represents a leakage coefficient; and the scalars $\eta_{\rm out} \in (0,1]$ and $\eta_{\rm in} \in (0,1]$  represent the conversion efficiency of energy extraction and injection, respectively. We recover our original storage model under a choice of parameters $\lambda = \eta_{\rm in} = \eta_{\rm out} =1$. Working in this more general setting, one can easily establish a generalization of the marginal value result in Theorem \ref{thm:mv} as
\begin{align} \label{eq:mvlossy}
\left. \frac{ \partial J^*(\theta)}{\partial b} \right|_{b=0} \hspace{-.15in} =  \ ( \rho m_{\alpha} + m_{\beta}) \mathbb{E}[  \Lambda(x^*, \v{\xi})]  +  m_{\beta} \mathbb{P}\{ \xi_{N-1} > x^* \},
\end{align}
where  $x^* = F^{-1}(\gamma)$. The parameter $\rho := \lambda \eta_{\rm in} \eta_{\rm out} \in (0,1]$  can be interpreted as a \emph{discount factor} reflecting the roundtrip inefficiency associated with an energy arbitrage opportunity (downcrossing event). Naturally, the more lossy the storage system, the lower its marginal value. We omit a  formal proof of \eqref{eq:mvlossy}, as it can be established using arguments that are analogous to those used in the proof of Theorem \ref{thm:mv}. 
\end{remark}
}

 We have the following corollary to Theorem \ref{thm:mv}, which is somewhat surprising. In the event that the intermittent supply is described by an independent and identically distributed (iid) random process, the marginal value expression (\ref{eq:margval}) reveals itself to be insensitive to the choice of probability distribution, and dependent only on the market prices.

\begin{corollary} \label{cor:ber} Let $\v{\xi}$ be and iid process. Then the marginal value of energy storage capacity at the origin $(b=0)$ satisfies
\begin{align*}
\left. \frac{ \partial J^*(\theta)}{\partial b} \right|_{b=0}  \hspace{-.1in} = \ (N-1)  (m_{\alpha} + m_{\beta})(1 -\gamma) \gamma \ + \ m_{\beta}(1-\gamma),
\end{align*}
where $\gamma := (p+m_{\beta})/(m_{\alpha} + m_{\beta})$.
\end{corollary}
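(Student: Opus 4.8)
The plan is to substitute the iid hypothesis directly into the marginal value formula \eqref{eq:margval} of Theorem \ref{thm:mv} and evaluate the two probabilistic quantities appearing there, namely the expected number of strict downcrossings $\mathbb{E}[\Lambda(x^*, \v{\xi})]$ and the overshoot probability $\mathbb{P}\{\xi_{N-1} > x^*\}$. Everything else in \eqref{eq:margval} is already explicit in the market prices, so the entire content of the corollary is the evaluation of these two terms under independence.

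First I would expand the expected downcrossing count by linearity of expectation. By Definition \ref{def:cross} we have $\Lambda(x^*, \v{\xi}) = \sum_{k=0}^{N-2} \v{1}_{\Dcal_k(\v{\xi})}(x^*)$, and $x^* \in \Dcal_k(\v{\xi})$ precisely when $\xi_k > x^*$ and $\xi_{k+1} < x^*$. Taking expectations gives
\begin{align*}
\mathbb{E}[\Lambda(x^*, \v{\xi})] = \sum_{k=0}^{N-2} \mathbb{P}\{\xi_k > x^*, \ \xi_{k+1} < x^*\}.
\end{align*}
The key step is then to invoke the iid assumption to factor each joint probability as $\mathbb{P}\{\xi_k > x^*\} \cdot \mathbb{P}\{\xi_{k+1} < x^*\}$. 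Since the process is iid, every marginal distribution coincides with the time-averaged distribution $F$; and because $x^* = F^{-1}(\gamma)$ with $F$ absolutely continuous (Assumption \ref{assum:cont}), it follows that $\mathbb{P}\{\xi_k < x^*\} = F(x^*) = \gamma$ and $\mathbb{P}\{\xi_k > x^*\} = 1 - \gamma$ for every $k$. Each of the $N-1$ summands therefore equals $(1-\gamma)\gamma$, so $\mathbb{E}[\Lambda(x^*, \v{\xi})] = (N-1)(1-\gamma)\gamma$. The identical reasoning yields $\mathbb{P}\{\xi_{N-1} > x^*\} = 1-\gamma$. Substituting both expressions into \eqref{eq:margval} and collecting terms produces the claimed formula.

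There is no genuine obstacle here, since the statement is a direct corollary of Theorem \ref{thm:mv}. The only point deserving mild care is the justification that strict and non-strict inequalities carry equal probability: this is exactly what absolute continuity of the marginals (inherited from Assumption \ref{assum:cont}) guarantees, and it is precisely what allows the downcrossing probability to factor cleanly into $\gamma(1-\gamma)$ with no residual dependence on the shape of the distribution. This distribution-free collapse—where only the price ratio $\gamma$ survives—is the mildly surprising phenomenon that the corollary is meant to highlight, and it is worth emphasizing in the write-up that it hinges entirely on the independence of successive increments killing the correlation structure that would otherwise enter $\mathbb{E}[\Lambda(x^*, \v{\xi})]$.
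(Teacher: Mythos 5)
Your proposal is correct and follows essentially the same route as the paper's proof: expand $\mathbb{E}[\Lambda(x^*,\v{\xi})]$ by linearity, factor each joint downcrossing probability via independence into $\big(1-\Phi_k(x^*)\big)\Phi_{k+1}(x^*)$, and use the identical-distribution hypothesis to identify each marginal with $F$ so that every summand collapses to $(1-\gamma)\gamma$. Your explicit remark that absolute continuity (Assumption \ref{assum:cont}) is what justifies $F(x^*)=\gamma$ and the interchangeability of strict and non-strict inequalities is a worthwhile point of care that the paper leaves implicit, but it does not change the argument.
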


\begin{proof}[Proof of Corollary \ref{cor:ber}]
Using the assumption of independence across time, the expected number of strict $x^*(0)$-level downcrossings can be expressed as
\begin{align*}
\mathbb{E}[  \Lambda(x^*(0), \v{\xi})  ] \  & = \ \sum_{k=0}^{N-2} \mathbb{E}\bmat{\mathbf{1}_{\Dcal_k(\v{\xi})}(x^*(0)) } \\ 
& = \ \sum_{k=0}^{N-2} \mathbb{P}\{  \xi_k > x^*(0), \ \xi_{k+1} < x^*(0) \}\\
& =  \ \sum_{k=0}^{N-2} \big(1 - \Phi_k( x^*(0)) \big) \cdot \Phi_{k+1}(x^*(0)). 
\end{align*}
And, as the marginal distributions are time invariant, we necessarily have equivalence between the time averaged distribution and each marginal distribution, which yields
\begin{align*}
\mathbb{E}[  \Lambda(x^*(0), \v{\xi})  ]  \  = \ \sum_{k=0}^{N-2} (1 - \gamma ) \gamma  \ = \ (N-1) \cdot (1 - \gamma )  \gamma.
\end{align*}
It similarly follows that $\mathbb{P}\{ \xi_{N-1} > x^*(0)\} = 1 - \gamma$. Direct substitution of the previous two identities into Equation \eqref{eq:margval} yields the desired result.
\end{proof}

 We establish as an intermediary result in the proof of Corollary \ref{cor:ber} that the expected number of downcrossings satisfies $\mathbb{E}[  \Lambda(x^*(0), \v{\xi})  ]   =  (N-1) \cdot (1 - \gamma )  \gamma$,   under the assumption of an iid wind power process. This structural dependency on the price ratio $\gamma \in (0,1]$ admits a simple probabilistic interpretation of the interplay between the  volatility of supply and the value of storage.  Specifically, the expected number of strict downcrossings of the optimal contract level is equal to the mean of a Binomial random variable with $N-1$ trials and success probability $ (1- \gamma)\gamma$. 
\blue{That is, under the assumption of an iid wind power process,  the sequence of  contract downcrossings can be interpreted as a sequence of independent coin flips, each of which has a success probability equal to $ (1- \gamma)\gamma$. This probability is maximized for $\gamma = 1/2$.}

\section{Conclusion} \label{conclu}
In this paper we have formulated and solved the problem of optimal contract sizing for a wind power producer (WPP)  participating in a conventional two-settlement electricity market, with \emph{co-located energy storage}. Specifically, we have shown that the problem of determining optimal contract offerings for a WPP with co-located energy storage reduces to an finite-dimensional convex optimization problem. Our results have the merit of providing key analytical insight into the trade-offs between a variety of factors such as energy storage capacity and maximum expected profit. In particular, we show the marginal value of storage capacity to be largest for initial investments, and provide an analytical characterization of this marginal value---which reveals an explicit dependency of the marginal value of storage on a certain statistical measure of  variability in the underlying wind power process.

\blue{As direction for future research, it would be of value to 
expand the framework for analysis developed in this paper to allow for time-variation in the RT imbalance prices, and the possibility of statistical correlation between the wind power and price processes. We also note that the potential value that a WPP might derive from 
energy storage goes well beyond the application
of  energy arbitrage considered in this paper. For example,
certain storage technologies posses the capability of providing voltage support
or frequency regulation services---cf. \cite{castillo2014grid} for a comprehensive survey of energy storage applications. As a challenging direction for future research, it would be of value to investigate the potential economic tradeoffs that might emerge in using storage to tap these multiple value streams.}

\appendix

%%%%%%%%%%%%%%%%%%%%%%%%%%%%%%%%%%%%%%%%%%%%%%%%%%%%%%%%%
 %%%%%%%%%%%%%%%%%%%%%%%%%%%%%%%%%%%%%%
%%%%%%%%%%%%%%%%%%%%%%%%%%%%%%%%%%%%%%%%%%%%%%%%%%%%%%%%%
\subsection{Proof of Theorem \ref{thm:convex1} } \label{app:convex1}

We establish concavity of $J^{\pi^*}(x;\theta)$ directly. Fix a storage type $\theta \in \Rset_+^2$ and let $x_{(1)} \in \Rset_+$ and $x_{(2)} \in \Rset_+$ be arbitrary forward contracts.\footnote{The subscripts here are not to be confused with time indices.}   Let 
\[  x_{(\lambda)} = \lambda x_{(1)} + (1- \lambda) x_{(2)} \]
denote a convex combination of said contracts, where $ \lambda \in [0,1]$.
We denote by $\pi_{(\cdot)}^* \in \Pi(\theta)$   the optimal control policy associated with the contract $x_{(\cdot)}$. And, given any admissible policy $\pi \in \Pi(\theta)$, we let $\{z_k^{\pi}\}$ and $\{u_k^{\pi}\}$ denote the random state and input  processes induced by the policy $\pi$.  

 We  establish the desired result  by showing:
\[ J^{\pi_{(\lambda)}^*}(x_{(\lambda)};\theta)   \ \geq \    \lambda \ J^{\pi_{(1)}^*}(x_{(1)};\theta) \ + \ (1 - \lambda) \ J^{\pi_{(2)}^*}(x_{(2)};\theta). \]
Consider the forward contract $x_{(\lambda)}$.
And consider a policy  $\pi_{(\lambda)}$ inducing the input process
$
u_{k}^{\pi_{(\lambda)}}  \ = \  \lambda \ u_{k}^{\pi_{(1)}^*} \ + \ (1-\lambda) \ u_{k}^{\pi_{(2)}^*},
$
 where  the associated state process is recursively determined by $z_{k+1}^{\pi_{(\lambda)}} = z_{k}^{\pi_{(\lambda)}} - u_{k}^{\pi_{(\lambda)}}$  for $z_{0}^{\pi_{(\lambda)}} =0$. It is not difficult to see that $z_{k}^{\pi_{(\lambda)}}  =  \lambda \ z_{k}^{\pi_{(1)}^*} \ + \ (1-\lambda) \ z_{k}^{\pi_{(2)}^*} $.  Admissibility of $\pi_{(\lambda)}$  is therefore immediate, as the underlying constraints on both the state and input define convex sets.  It follows that 
 \[ J^{\pi_{(\lambda)}^*}(x_{(\lambda)};\theta)   \ \geq \ J^{\pi_{(\lambda)}}(x_{(\lambda)};\theta), \]
 by optimality of policy $\pi_{(\lambda)}^* \in \Pi(\theta)$ for the contract $x_{(\lambda)}$. Expanding the expression to the right of the inequality further, we have that
 
\begin{align}
& J^{\pi_{(\lambda)}}(x_{(\lambda)};\theta)  \notag\\
& = N \cdot px_{(\lambda)}  \ - \ \mathbb{E}\left[ \ \sum_{k=0}^{N-1} g\left(x_{(\lambda)}, u_k^{\pi_{(\lambda)}}, \xi_k \right) \ \right]  \notag\\
& \geq   N \cdot px_{(\lambda)}  \ - \ \E\left[  \ \  \sum_{k=0}^{N-1}  \lambda\ g\left(x_{(1)}, u_k^{\pi_{(1)}^*}, \xi_k \right) \right.  \notag \\  
& \hspace{.93in} + \  \ (1-\lambda) \ g\left(x_{(2)}, u_k^{\pi_{(2)}^*}, \xi_k \right)   \ \Bigg] \label{eq:inter.1} \\
& = \lambda \ J^{\pi_{(1)}^*}(x_{(1)};\theta)   \ \ + \ \  (1-\lambda) \ J^{\pi_{(2)}^*}(x_{(2)};\theta) \label{eq:inter.2},
\end{align}
where the inequality \eqref{eq:inter.1} follows from convexity of $g$ and the decomposition $\xi_k = \lambda \xi_k + (1-\lambda) \xi_k$.  The final equality \eqref{eq:inter.2} follows from the optimality of the policies $\pi_{(1)}^* \in \Pi(\theta)$ and $\pi_{(2)}^* \in \Pi(\theta)$ for the contracts $x_{(1)}$ and $x_{(2)}$ respectively. Thus, $J^{\pi^*}(x;\theta)$ is a concave function in $x$ over $\Rset_+$.

%%%%%%%%%%%%%%%%%%%%%%%%%%%%%%%%%%%%%%%%%%%%%%%%%%%%%%%%%
%%%%%%%%%%%%%%%%%%%%%%%%%%%%%%%%%%%%%%
%%%%%%%%%%%%%%%%%%%%%%%%%%%%%%%%%%%%%%%%%%%%%%%%%%%%%%%%%
\subsection{Proof of Theorem \ref{result2} } \label{app:result2}

Monotonicity is straightforward. Fix a storage type $\theta \in \Rset_+^2$. Let $\varepsilon \in \Rset_+^2$.  Clearly, $\Pi(\theta + \varepsilon) \supseteq \Pi(\theta)$ and hence $J^*(\theta + \varepsilon) \geq J^*(\theta)$. 
The proof of concavity of $J^*(\theta)$ in $\theta$ over $\Rset_+^2$ is analogous to the proof of Theorem \ref{thm:convex1}.

\subsection{Proof of Theorem \ref{thm:mv}} \label{app:margval}
Fix $r >0$, and write $x^*(b) = x^*(\theta)$ and $\pi^*(b) = \pi^*(\theta)$ to isolate their dependence on the energy capacity parameter $b$, as we have fixed $r$. \blue{It will be  convenient to decompose the optimal expected profit associated with a storage type $\theta$ as 
\begin{align*}
J^*(\theta) =   pN  x^*(b)  -  \mathbb{E}\left[ Q^{\pi^*(b)}(x^*(b), \v{\xi})  \right],
\end{align*}
where
\begin{align*}
Q^{\pi^*(b)}(x^*(b), \v{\xi})  = \sum_{k=0}^{N-1} g(x^*(b), u_k^{\pi^*(b)}, \xi_k)
\end{align*}
denotes the  imbalance cost realized under $(x^*(b), \pi^*(b), \v{\xi})$.}

We begin the proof by expressing the (right) partial derivative of $J^*(\theta)$ with respect to $b$ at the origin as
\begin{align} \notag
& \left. \frac{ \partial J^*(\theta)}{\partial b} \right|_{b=0} = \  pN \left. \frac{ \partial x^*(b)}{\partial b} \right|_{b=0}   \\
 &\hspace{.2in} - \ \lim_{\varepsilon \downarrow 0} \mathbb{E}\left[\frac{ Q^{\pi^*(\varepsilon)}(x^*(\varepsilon), \v{\xi})  -   Q^{\pi^*(0)}(x^*(0), \v{\xi}) }{\varepsilon}  \right].  \label{eq:lim_deriv}
\end{align}
%It is straightforward to show right differentiability of $x^*(b)$ at $b= 0$.
We proceed in establishing existence of the limit through its explicit characterization. First, define a sequence of functions $\{f_n\}$, mapping sample paths $\v{\xi} \in [0,1]^N$ into $\Rset$, as 
\begin{align*}
f_n(\v{\xi}) \ = \  \frac{ Q^{\pi^*(\varepsilon_n)}(x^*(\varepsilon_n), \v{\xi})  -  Q^{\pi^*(0)}(x^*(0), \v{\xi}) }{\varepsilon_n} , \quad  \ n \in \Nset
\end{align*}
where $\{\varepsilon_n\}$ is a sequence of non-negative real numbers converging  monotonically to zero. We will prove Theorem \ref{thm:mv} through application of the  Bounded Convergence Theorem. We first have the following result establishing almost sure convergence  and uniform boundedness of the sequence $\{f_n(\v{\xi})\}$. 
See Appendix \ref{app:as} for its proof.

\begin{proposition} \label{prop:as} Let $(\Rset, \Bcal(\Rset), \mathbb{P})$ denote the complete probability space according to which the random variables  $\v{\xi} = (\xi_0, \xi_1, \dots,\xi_{N-1})$ are defined, where $\Bcal(\Rset)$ denotes the Borel $\sigma$-algebra on $\Rset$. \blue{Define the sequence of functions $\{f_n\}$ according to
\begin{align*}
f_n(\v{\xi}) \ = \  \frac{ Q^{\pi^*(\varepsilon_n)}(x^*(\varepsilon_n), \v{\xi})  -  Q^{\pi^*(0)}(x^*(0), \v{\xi}) }{\varepsilon_n} , \quad  \ n \in \Nset
\end{align*}
where $\{\varepsilon_n\}$ is a sequence of non-negative real numbers converging  monotonically to zero.}
It follows that:

\

\noindent (i) \ $\{f_n(\v{\xi})\}$ is  a sequence of real-valued random variables converging \emph{almost surely} to the real-valued random variable $f(\v{\xi})$ defined by
\begin{align*} 
 & f(\v{\xi})   =   \left. \frac{ \partial x^*(b)}{\partial b} \right|_{b=0} \left( \alpha \cdot | \Kcal^-(x^*(0), \v{\xi})|    -   \beta \cdot | \Kcal^+(x^*(0),\v{\xi})| \right)  \\   
 & - \   (\alpha + \beta) \cdot \Lambda(x^*(0), \v{\xi}) \ - \ \beta \cdot \v{1}_{(x^*(0), \infty)}(\xi_{N-1}).
\end{align*}
The random variables $f(\v{\xi})$ and $\{f_n(\v{\xi})\}$ are defined on the common probability space $(\Rset, \Bcal(\Rset), \mathbb{P})$.

\

\noindent (ii) There exists a constant $M < \infty$, such that  $| f_n(\v{\xi})| < M$ almost surely. 

\end{proposition}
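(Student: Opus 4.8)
The plan is to prove both parts by a pathwise analysis of the storage trajectory, exploiting the crucial fact that the optimal threshold policy \eqref{eq:optpolicy} depends on $\v{\xi}$ only through the contract level and is \emph{independent of the prices} $(\alpha,\beta)$. First I would split the difference quotient into a \emph{storage term} and a \emph{contract--sensitivity term}, writing $f_n(\v{\xi}) = A_n + B_n$ with
\begin{align*}
A_n &= \frac{Q^{\pi^*(\varepsilon_n)}(x^*(\varepsilon_n),\v{\xi}) - Q^{\pi^*(0)}(x^*(\varepsilon_n),\v{\xi})}{\varepsilon_n},\\
B_n &= \frac{Q^{\pi^*(0)}(x^*(\varepsilon_n),\v{\xi}) - Q^{\pi^*(0)}(x^*(0),\v{\xi})}{\varepsilon_n}.
\end{align*}
Here I use that $\pi^*(0)$ applies zero control, so $Q^{\pi^*(0)}(x,\v{\xi}) = \sum_{k=0}^{N-1}\big[\alpha(x-\xi_k)^+ + \beta(\xi_k-x)^+\big]$ is the zero--storage (newsvendor) cost.

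The core is the analysis of $A_n$. Fix a sample path with $\xi_k \neq x^*(0)$ for all $k$; by Assumption~\ref{assum:cont} this holds almost surely, and such a path has a strictly positive minimum gap to $x^*(0)$. Since $r>0$ is fixed, $x^*(\varepsilon_n)\to x^*(0)$, and $\varepsilon_n\downarrow 0$, for all large $n$ the rate constraint is slack, every surplus step ($\xi_k>x^*(\varepsilon_n)$) fills the storage to capacity, and every shortfall step ($\xi_k<x^*(\varepsilon_n)$) empties it; hence the state just before step $k$ is $z_k=\varepsilon_n\,\v{1}_{(x^*(\varepsilon_n),\infty)}(\xi_{k-1})$ for $k\ge1$, $z_0=0$, and $|u_k^{\pi^*(\varepsilon_n)}|\le\varepsilon_n$. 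Substituting into $g$, each surplus step reduces the penalty by $\beta(\varepsilon_n-z_k)$ and each shortfall step by $\alpha z_k$. Summing these reductions and applying the elementary identity relating the number of upcrossings $U$ and downcrossings $\Lambda$, namely $U(x,\v{\xi})=\Lambda(x,\v{\xi})+\v{1}_{(x,\infty)}(\xi_{N-1})-\v{1}_{(x,\infty)}(\xi_0)$, collapses the sum to
\[ Q^{\pi^*(0)}(x,\v{\xi}) - Q^{\pi^*(\varepsilon_n)}(x,\v{\xi}) = \varepsilon_n\big[(\alpha+\beta)\Lambda(x,\v{\xi}) + \beta\,\v{1}_{(x,\infty)}(\xi_{N-1})\big] \]
evaluated at $x=x^*(\varepsilon_n)$. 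Because $\Lambda(\cdot,\v{\xi})$ and $\v{1}_{(\cdot,\infty)}(\xi_{N-1})$ are locally constant at $x^*(0)$ on such a path, this gives $A_n\to -\big[(\alpha+\beta)\Lambda(x^*(0),\v{\xi})+\beta\,\v{1}_{(x^*(0),\infty)}(\xi_{N-1})\big]$.

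For $B_n$, I would factor it as $\tfrac{x^*(\varepsilon_n)-x^*(0)}{\varepsilon_n}$ times the difference quotient of the convex piecewise--linear map $\phi(x):=\sum_{k=0}^{N-1}[\alpha(x-\xi_k)^+ + \beta(\xi_k-x)^+]$. On a strict path $\phi$ is differentiable at $x^*(0)$, so the second factor tends to $\phi'(x^*(0))=\alpha|\Kcal^-(x^*(0),\v{\xi})|-\beta|\Kcal^+(x^*(0),\v{\xi})|$, while the first tends to the right--derivative $\partial x^*/\partial b|_{b=0}$ (whose existence and finiteness I invoke). Adding the two limits yields exactly $f(\v{\xi})$, proving (i). For the uniform bound (ii), the state never leaves $[0,\varepsilon_n]$, so $|u_k^{\pi^*(\varepsilon_n)}|\le\varepsilon_n$ for \emph{every} $n$, and right--differentiability of $x^*$ furnishes a deterministic constant $L_x$ with $|x^*(\varepsilon_n)-x^*(0)|\le L_x\varepsilon_n$; since $g$ is Lipschitz in $(x,u)$ with constant at most $\alpha+\beta$, each of the $N$ summands of the numerator is $O(\varepsilon_n)$, whence $|f_n(\v{\xi})|\le N(\alpha+\beta)(L_x+1)=:M$ uniformly in $n$ and $\v{\xi}$ (taking the prices bounded so that $M$ is a genuine constant; otherwise one replaces $M$ by the integrable dominating function $N(\alpha+\beta)(L_x+1)$ and the Bounded Convergence Theorem by Dominated Convergence).

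The step I expect to be the main obstacle is the pathwise storage--trajectory analysis underlying $A_n$: proving that for small capacity the threshold policy charges and discharges \emph{completely} at each step, and that the stepwise penalty reductions telescope into the crossing statistics. Getting the boundary term $\beta\,\v{1}_{(x^*,\infty)}(\xi_{N-1})$ correct hinges on the up/down--crossing identity together with careful treatment of the endpoints $k=0$ and $k=N-1$. By contrast, the remaining analytic points---almost sure eventual stabilization of the crossing counts near $x^*(0)$ (from Assumption~\ref{assum:cont}) and the existence of the right--derivative of $x^*$---are comparatively routine given the quantile structure in Lemma~\ref{lem:quantile} and the concavity established in Theorem~\ref{thm:convex1}.
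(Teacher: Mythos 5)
Your proposal is correct and follows essentially the same route as the paper's proof: restrict to the almost-sure set of sample paths that strictly avoid $x^*(0)$, show that for $n$ large the threshold policy fully charges/discharges an $\varepsilon_n$-store at each crossing so that the cost difference collapses onto the crossing statistics $\Lambda$, $\Kcal^{\pm}$ and the terminal indicator, and bound $|f_n|$ via $|u_k|\leq\varepsilon_n$ together with the right-differentiability of $x^*(b)$ at $b=0$. Your split into the storage term $A_n$ and the contract-sensitivity term $B_n$ is only a reorganization of the paper's single substitution of the closed-form optimal inputs into $Q$, and your parenthetical about replacing bounded by dominated convergence when $(\alpha,\beta)$ are unbounded is a fair (and slightly more careful) reading of the same argument.
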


 It follows from Prop. \ref{prop:as} and the Bounded Convergence Theorem that $\lim_{n \rightarrow \infty} \mathbb{E}[f_n(\v{\xi})] = \mathbb{E}[f(\v{\xi})]$. Note that it suffices for uniform boundedness to hold almost surely, as the underlying probability space is complete by assumption. Finally, it follows from Lemma \ref{lem:avg_dev} that 
\begin{align*}
&\mathbb{E}\bmat{ \alpha \cdot | \Kcal^-(x^*(0), \v{\xi})|  \  -  \ \beta \cdot | \Kcal^+(x^*(0),\v{\xi})| } = pN. 
%&\hspace{.2in} =  m_\alpha \gamma N - m_\beta (1-\gamma)N =   pN.
\end{align*}
The desired result follows.

%%%%%%%%%%%%%%%%%%%%%%%%%%%%%%%%%%%%%%%%%%%%%%%%%%%%%%%%%
% Proof of Prop %%%%%%%%%%%%%%%%%%%%%%%%%%%%%%%%%%%%%%
%%%%%%%%%%%%%%%%%%%%%%%%%%%%%%%%%%%%%%%%%%%%%%%%%%%%%%%%%
\subsection{Proof of Proposition \ref{prop:as}}  \label{app:as}

Throughout the proof, we restrict our attention to only those sample paths $\v{\xi}$, which exhibit strict crossings of the contract $x^*(0)$. Accordingly, define the set of sample paths $\v{\xi}$ that are nowhere equal to $x^*(0)$ as 
$\Scal = \{ \v{\xi} \in [0,1]^N \ | \ \xi_k \neq x^*(0) \ \ \forall \ k \}$.
 It follows from Assumption \ref{assum:cont} that $\mathbb{P}\{  \xi \in \Scal\} = 1$.

 \emph{Proof of Part (i).} \  \ We first show that the sequence of functions $\{f_n\}$ converges pointwise to $f$ on $\Scal$. Fix $\v{\xi} \in \Scal$. We begin by controlling the behavior of the sequence $\{f_n(\v{\xi})\}$ for $n$ large enough. 
Denote by 
\begin{align*}
\Ical^-_n(\v{\xi}) & = \{ 0 \leq k \leq N-2 \ | \ \mathbf{1}_{\Dcal_k(\v{\xi})}(x^*(\varepsilon_n)) = 1 \}, \ \ \text{and} \\
\Ical^+_n(\v{\xi}) & = \{ 0 \leq k \leq N-2 \ | \ \mathbf{1}_{\Ucal_k(\v{\xi})}(x^*(\varepsilon_n))  = 1 \}
\end{align*}
the  collection of time indices at which the sample path $\v{\xi}$ exhibits strict downcrossings and upcrossings of the level $x^*(\varepsilon_n)$, respectively.

It follows from the right continuity of the optimal contract $x^*(b)$ at $b=0$ that there exists an integer $N_1(\v{\xi}) \in \Nset$  
\blue{such  that  
$\Ical^{+}_n(\v{\xi}) = \Ical^{+}_{N_1(\v{\xi})} (\v{\xi})$ and $\Ical^{-}_n(\v{\xi}) = \Ical^{-}_{N_1(\v{\xi})} (\v{\xi})$
for all $n \geq N_1(\v{\xi})$.} 
%
%\blue{such  that  
%$\Ical^{+}_m(\v{\xi}) = \Ical^{+}_{\ell} (\v{\xi})$ and $\Ical^{-}_m(\v{\xi}) = \Ical^{-}_{\ell} (\v{\xi})$
%for all $m, \ell \geq N_1(\v{\xi})$.}
It will also be useful to define the integer
\[ N_2(\v{\xi}) \ = \ \min \{  n \geq N_1(\v{\xi})  \ | \  \varepsilon_n \leq \min\{r, \ | \xi_k - x^*(\varepsilon_n) | \;\} \ \forall \ k \}. \]
Essentially,  $n \geq N_2(\v{\xi})$ ensures that  any  downcrossing (upcrossing) of the level $x^*(\varepsilon_n)$ will result in a full discharge (charge) of the energy storage in the amount of $\varepsilon_n$ under the optimal control policy $\pi^*(\varepsilon_n)$. 
We now derive a closed form expression for the optimal imbalance cost $Q^{\pi^*(\varepsilon_n)}(x^*(\varepsilon_n), \v{\xi})$ for $n \geq N_2(\v{\xi})$. 

Let $n \geq N_2(\v{\xi})$. 
It is not difficult to see that, under the optimal control policy $\pi^*(\varepsilon_n)$ (cf. Prop. \ref{prop:optpolicy}), the storage system is fully charged (discharged) only at times immediately following a strict $\v{\xi}$ upcrossing (downcrossing)  of $x^*(\varepsilon_n)$.
More precisely, the sequence of optimal control inputs can be explicitly expressed as
\begin{align} \label{eq:opt_control}
&u_k^{\pi^*(\varepsilon_n)}  \\
\notag & = \left\{ \begin{array}{ll}  \varepsilon_n \cdot \mathbf{1}_{  (- \infty, \xi_k) }(x^*(\varepsilon_n)), &   k =0 \\
\varepsilon_n \cdot \mathbf{1}_{   \Ucal_{k-1}(\v{\xi})}(x^*(\varepsilon_n))  \ - \ \varepsilon_n \cdot \mathbf{1}_{   \Dcal_{k-1}(\v{\xi})}(x^*(\varepsilon_n))   , & k >0 \end{array} \right.
\end{align}
for $k =0, \dots, N-1$.
Substituting Equation (\ref{eq:opt_control}) into our nominal expression for the imbalance cost
\[ Q^{\pi^*(\varepsilon_n)}(x^*(\varepsilon_n), \v{\xi})  = \sum_{k=0}^{N-1} g(x^*(\varepsilon_n), u_k^{\pi^*(\varepsilon_n)}, \xi_k), \] 
 we have
\begin{align} \nonumber  \label{eq:inc_cost} 
& f_n(\v{\xi}) = - \  (\alpha + \beta)  \Lambda(x^*(\varepsilon_n) , \v{\xi})  
   - \ \beta  \v{1}_{(x^*(\varepsilon_n), \infty)}(\xi_{N-1}) \\
\notag &  + \bigg( \alpha  | \mathcal{K}^-(\v{\xi}, x^*(\varepsilon_n) ) |      -  \beta  | \mathcal{K}^+(\v{\xi}, x^*(\varepsilon_n) ) | \bigg)  \left( \frac{x^*(\varepsilon_n)  - x^*(0)}{\varepsilon_n} \right) 
\end{align}
The pointwise convergence $\{f_n\}$ to $f$ on $\Scal$ follows from the fact that 
$$ \mathcal{K}^{\pm}(\v{\xi}, x^*(\varepsilon_n) ) = \mathcal{K}^{\pm}(\v{\xi}, x^*(0) ),$$
for  $\v{\xi} \in \Scal$ and $n \geq N_2(\v{\xi})$.  

We now show that the convergence is \emph{almost sure}.
First notice that each function $f_n: \Rset^N \rightarrow \Rset$---being a finite linear combination of indicator functions defined on Borel measurable sets---is Borel measureable.
Since the composition of  measurable functions is measurable, it follows that the composition $f_n(\v{\xi})$ is a random variable on $(\Rset, \Bcal(\Rset), \mathbb{P})$. The same is true for the pointwise limit function $f(\v{\xi})$. Almost sure convergence is immediate as
\begin{align*}
&\mathbb{P}\left\{  \lim_{n \rightarrow \infty} f_n(\v{\xi}) = f( \v{\xi}) \right\} \\
& \qquad \geq  \mathbb{P}\left\{  \left. \lim_{n \rightarrow \infty} f_n(\v{\xi}) = f( \v{\xi})  \right|    \v{\xi} \in \Scal   \right\}\mathbb{P}\{ \v{\xi} \in \Scal \} = 1. 
\end{align*}
This completes the proof of Part (i).

\emph{Proof of Part (ii).} \ \  We first show uniform boundedness of each function $f_n : \Rset^N \rightarrow \Rset$ on $\Scal$. By assumption, we have restricted the image of each random variable $\xi_k$ to $[0,1]$ for all $k$. As an immediate consequence, we have that $ 0 \leq x^*(b) \leq 1$ for all $b\geq 0$. It follows from this fact, and the right differentiability of $x^*(b)$ at $b=0$, that
\[  B := \sup\left\{ \left. \frac{|x^*(\varepsilon_n) - x^*(0)|}{\varepsilon_n} \ \right| \   n \in \Nset \right\} \ < \ \infty. \]
Combining this upper bound with the observation that $| \mathcal{K}^-(\v{\xi}, x^*(\varepsilon_n))|$,  $| \mathcal{K}^+(\v{\xi}, x^*(\varepsilon_n))|$, $ \Lambda(\v{\xi}, x^*(\varepsilon_n)) \leq N$ for all $n \in \Nset$ and  $\v{\xi} \in \Scal$, we have that 
\[ | f_n(\v{\xi}) | \ \leq \  NB \cdot (\alpha + \beta) +  N \cdot (\alpha + \beta) + \beta \]
for all $n \in \Nset$ and  $\v{\xi} \in \Scal$.
Uniform boundedness holds almost surely as $\mathbb{P}\{ \v{\xi} \in \Scal \} = 1$. This completes the proof of Part (ii).

\bibliographystyle{IEEEtran}
\bibliography{references_bib}{\markboth{References}{References}}           

\end{document}